\newcommand{\Z}{\mathbb{Z}}
\newcommand{\Q}{\mathbb{Q}}
\newcommand{\aQ}{\overline{\mathbb{Q}}}
\newcommand{\C}{\mathbb{C}}
\newcommand{\Ax}{\mathbb{A}^{\times}}
\newcommand{\Kl}{K^{\lambda}}
\newcommand{\DK}{\Delta_{K}}
\newcommand{\OK}{\mathcal{O}_K}
\newcommand{\OL}{\mathcal{O}_L}
\newcommand{\q}{\mathfrak{q}}
\newcommand{\Pcq}{\mathcal{P}^{\mathfrak{q}}}
\newcommand{\OLq}{\mathcal{O}_{L^{\mathfrak{q}}}}
\newcommand{\gq}{\gamma_{\mathfrak{q}}}
\newcommand{\alq}{\alpha_{\mathfrak{q}}}
\newcommand{\alp}{\alpha_{\mathfrak{p}}}
\newcommand{\Kq}{K_{\mathfrak{q}}}
\newcommand{\eq}{e_{\mathfrak{q}}}
\newcommand{\sq}{\sigma_{\mathfrak{q}}}
\newcommand{\Lq}{L^{\mathfrak{q}}}
\newcommand{\bq}{\beta_{\mathfrak{q}}}
\newcommand{\bqb}{\overline{\beta_{\mathfrak{q}}}}
\newcommand{\Pq}{P_{\mathfrak{q}}}
\newcommand{\Tq}{T_{\mathfrak{q}}}
\newcommand{\phiEp}{\varphi_{E,p}}
\newcommand{\h}{\mathrm{H}}
\newcommand{\p}{\mathfrak{p}}
\newcommand{\po}{\mathfrak{p}_0}
\newcommand{\poq}{\mathfrak{p}^{\mathfrak{q}}_0}
\newcommand{\Poq}{\mathcal{P}^{\mathfrak{q}}_0}
\newcommand{\pL}{\mathfrak{p}_L}
\newcommand{\cN}{\mathcal{N}}
\newcommand{\Zp}{\mathbb{Z}_{p}}
\newcommand{\Qp}{\mathbb{Q}_{p}}
\newcommand{\Kp}{K_{\mathfrak{p}}}
\newcommand{\Fp}{\mathbb{F}_{p}}
\newcommand{\Fpx}{\mathbb{F}_{p}^{\times}}
\newcommand{\Ip}{I_{\mathfrak{p}}}
\newcommand{\ap}{a_{\p}}
\newcommand{\ato}{a_{\tau}}
\newcommand{\GL}{\mathrm{GL}}
\newcommand{\Gal}{\mathrm{Gal}}
\newcommand{\al}{\alpha}
\newcommand{\la}{\lambda}
\newcommand{\kip}{\chi_p}
\newcommand{\J}{\mathcal{J}}
\newcommand{\val}{\mathrm{val}}
\newcommand{\Mod}{\mkern-5mu \mod}
\def\Overline #1#2#3%
\newtheorem*{TheoII}{Th\'eor\`eme II (Forme de la repr\'esentation semi-simplifi\'ee)}
\newtheorem*{TheoCheboEff}{Th\'eor\`eme (Chebotarev effectif,~\cite{[LMO]})}
\newtheorem*{TheoI}{Th\'eor\`eme I (Borne uniforme d'irr\'eductibilit\'e)}
\newtheorem*{TheoIII}{Th\'eor\`eme III}
\newtheorem*{Question}{Question}
\theoremstyle{plain}
\newtheorem{theoreme}{Th\'eor\`eme}[section]
\newtheorem{proposition}[theoreme]{Proposition}
\newtheorem{lemme}[theoreme]{Lemme}
\newtheorem{definition}[theoreme]{D\'efinition}
\theoremstyle{remark}
\newtheorem{remarques}[theoreme]{Remarques}
\newtheorem{notation}[theoreme]{Notation}
\newtheorem{notations}[theoreme]{Notations}
\title{Crit\`ere d'irr\'eductibilit\'e pour les courbes elliptiques semi-stables sur un corps de nombres}
\author{Agn\`es David \\
 {\small Laboratoire de math\'ematiques de Versailles } \\
{\small  Universit\'e de Versailles Saint-Quentin-en-Yvelines } \\
{\small 45 avenue des \'Etats-Unis }\\
{\small  78035 Versailles Cedex } \\
{\small  Agnes.David@ens-lyon.org} 
}
\date{} 
\begin{document}

\maketitle

\begin{abstract}
For a fixed number field and an elliptic curve defined and semi-stable over this number field,
 we consider the set of prime numbers~$p$ such that the Galois representation attached to the $p$-torsion points of the elliptic curve is reducible.
When the number field satisfies a certain necessary condition, we give an explicit bound, depending only on the number field and not on the semi-stable elliptic curve, for these primes.
This generalizes previous results of Kraus.
\end{abstract}

\section*{Introduction}	

Cet article traite de la r\'eductibilit\'e de la repr\'esentation galoisienne associ\'ee aux points de torsion d'une courbe elliptique semi-stable d\'efinie sur un corps de nombres.

Pour les points de torsion dont l'ordre est un nombre premier strictement sup\'erieur \`a une certaine borne, qui ne d\'epend que du corps de base et qu'on explicite,
on d\'emontre que, lorsque cette repr\'esentation est r\'eductible, sa semi-simplifi\'ee est isomorphe \`a la repr\'esentation associ\'ee \`a une courbe elliptique semi-stable et ayant des multiplications complexes sur le corps de base  (voir le th\'eor\`eme~II).

On en d\'eduit, lorsque le corps de base v\'erifie la condition n\'ecessaire que de telles courbes semi-stables et \`a multiplications complexes n'existent pas, une borne sup\'erieure uniforme pour les nombres premiers pouvant donner lieu \`a une repr\'esentation r\'eductible pour une courbe elliptique semi-stable (voir le th\'eor\`eme~I). On g\'en\'eralise ainsi des r\'esultats de Kraus (voir~\cite{[Krau2]} et~\cite{[Krau]}).
Enfin, on donne des exemples de corps satisfaisant le th\'eor\`eme~I (voir la partie~\ref{ssec:HypH}).

Les notations sont les suivantes.
On fixe un corps de nombres~$K$ et un plongement de~$K$ dans~$\C$ (dans tout le texte, on consid\'erera ainsi~$K$ comme un sous-corps de~$\C$). 
Soit~$\aQ$ la cl\^oture alg\'ebrique de~$\Q$ dans~$\C$~; on note~$G_K$ le groupe de Galois absolu~$\Gal(\aQ / K)$ de~$K$.
\`A partir de la partie~\ref{sec:glob} et pour toute la fin du texte, on suppose que l'extension~$K/\Q$ est galoisienne.

On fixe une courbe elliptique~$E$ d\'efinie sur $K$ qui est semi-stable sur~$K$, c'est-\`a-dire qui a, en toute place finie de~$K$, soit r\'eduction multiplicative (d\'eploy\'ee ou non) soit bonne r\'eduction. 

On fixe un nombre premier~$p$ sup\'erieur ou \'egal \`a~$5$ et non ramifi\'e dans~$K$.
 On note~$E_p$ l'ensemble des points de~$E$ dans~$\aQ$ qui sont de~$p$-torsion~; 
  c'est un espace vectoriel de dimension~$2$ sur~$\Fp$, sur lequel le groupe de Galois absolu~$G_K$ de~$K$ agit~$\Fp$-lin\'eairement.
On d\'esigne par~$\phiEp$ la repr\'esentation de~$G_K$ ainsi obtenue~; 
elle prend ses valeurs dans le groupe~$\GL(E_p)$ qui, apr\`es choix d'une base pour~$E_p$, est isomorphe \`a~$\GL_2(\Fp)$.
On note enfin~$\kip$ le caract\`ere cyclotomique de~$G_K$ dans~$\Fpx$~; il co\"incide avec le d\'eterminant de la repr\'esentation~$\phiEp$.

Lorsque le corps de base~$K$ est celui des rationnels~$\Q$,
 Mazur a d\'emontr\'e dans~\cite{[Maz]} que, pour~$p$ strictement sup\'erieur \`a~$7$ et~$E$ semi-stable
 (ou pour~$p$ strictement sup\'erieur \`a~$163$ et~$E$ d\'efinie sur~$\Q$, pas forc\'ement semi-stable),
 la repr\'esentation~$\phiEp$ est irr\'eductible.
 Le th\'eor\`eme de Mazur constitue, pour le corps~$\Q$, une partie de la r\'eponse au \og probl\`eme de Serre uniforme \fg\ (voir la partie~4.3 de~\cite{[Se]})
   qui consiste \`a d\'eterminer s'il existe une borne~$B_K$ v\'erifiant~:
   pour toute courbe elliptique~$E$ d\'efinie sur~$K$ et sans multiplication complexe sur~$\aQ$ et tout nombre premier~$p$ strictement sup\'erieur \`a~$B_K$,
la repr\'esentation~$\phiEp$ est surjective.
On pourra \`a ce sujet consulter les travaux r\'ecents de Bilu, Parent et Rebolledo (voir~\cite{[BPR]}).

 On supposera donc dans tout ce texte que le corps~$K$ est diff\'erent de~$\Q$ et on se limitera \`a l'\'etude des courbes elliptiques semi-stables sur~$K$.
 Dans la direction d'une g\'en\'eralisation du r\'esultat de Mazur pour les courbes elliptiques semi-stables, Kraus a pos\'e, et trait\'e dans certains cas, la question suivante (voir \cite{[Krau2]} et~\cite{[Krau]}).

\begin{Question}
Existe-t-il une borne~$C_K$, ne d\'ependant que du corps de nombres~$K$ et v\'erifiant~:
pour toute courbe elliptique~$E$ d\'efinie et semi-stable sur~$K$ et tout nombre premier~$p$ strictement sup\'erieur \`a~$C_K$,
la repr\'esentation~$\phiEp$ est irr\'eductible ?
\end{Question}

Kraus a r\'esolu cette question, en explicitant la borne~$C_K$, pour les corps quadratiques dans~\cite{[Krau2]} et pour les corps v\'erifiant une certaine condition~$(C)$ dans~\cite{[Krau]}.
L'appendice~B de~\cite{[Krau]} mentionne également que le th\'eor\`eme non effectif de~\cite{[Mom]}, alli\'e aux bornes de Merel (voir~\cite{[Mer96]}) sur l'ordre des points de torsion d'une courbe elliptique, donne des r\'esultats semblables pour les corps~$K$ ne contenant le corps de classes de Hilbert d'aucun corps quadratique imaginaire.

Une condition n\'ecessaire sur le corps~$K$ pour que la r\'eponse  \`a la question ci-dessus soit positive est qu'il n'existe pas de courbe elliptique d\'efinie et semi-stable sur~$K$, ayant de plus des multiplications complexes sur~$K$ (on remarque qu'une telle courbe elliptique a alors bonne r\'eduction en toute place de~$K$).
En effet, pour une telle courbe elliptique~$E$, ayant des multiplications complexes par un corps quadratique imaginaire~$L$ contenu dans~$K$, la repr\'esentation~$\phiEp$ est r\'eductible pour tout nombre premier~$p$ d\'ecompos\'e dans~$L$ (voir proposition~1.2 de~\cite{[Bil]}).

On montre ici que cette condition est suffisante.
 Lorsqu'elle est r\'ealis\'ee, on donne de plus une forme explicite pour une borne~$C_K$ r\'epondant \`a la question (voir d\'efinition~\ref{def:CK}).
 
 \begin{TheoI}
 On suppose que le corps~$K$ est galoisien sur~$\Q$.
 Les assertions suivantes sont \'equivalentes.
\begin{enumerate}
\item Il n'existe pas de courbe elliptique d\'efinie sur~$K$, ayant des multiplications complexes sur~$K$ et partout bonne réduction sur~$K$.
\item Il existe une borne~$C_K$, ne d\'ependant que du corps de nombres~$K$ et v\'erifiant~:
pour toute courbe elliptique~$E$ d\'efinie et semi-stable sur~$K$ et tout nombre premier~$p$ strictement sup\'erieur \`a~$C_K$,
la repr\'esentation~$\phiEp$ est irr\'eductible.
\end{enumerate}
Lorsqu'elles sont v\'erifi\'ees, la borne~$C_K$ donn\'ee par la d\'efinition~\ref{def:CK} satisfait la deuxi\`eme assertion.
\end{TheoI}

On trouvera dans la partie~\ref{ssec:HypH} (th\'eor\`eme~III) des exemples de corps~$K$ satisfaisant le th\'eor\`eme~I.

On renvoie aux d\'efinitions \ref{def:C1(K)}, \ref{def:C2(K)}, \ref{def:C(K,n)}, \ref{def:CK}, pour une formule explicite pour la borne~$C_K$.
Elle d\'epend du degr\'e, du discriminant, du nombre de classes et du r\'egulateur du corps~$K$,
 ainsi que d'une constante absolue intervenant dans une forme explicite du th\'eor\`eme de Chebotarev due \`a Lagarias, Montgomery et Odlyzko (voir la partie~\ref{ssec:Cheboeff} et~\cite{[LMO]}).
 La d\'etermination de la borne~$C_K$ utilise \'egalement les bornes de Bugeaud et Gy{\H{o}}ry (voir~\cite{[BuGy]}) sur la hauteur d'un g\'en\'erateur d'un id\'eal (voir partie~\ref{ssec:BuGy})
 et, de mani\`ere cruciale, la borne uniforme sur l'ordre des points de torsion d'une courbe elliptique de Merel et Oesterl\'e qui figure dans les introductions de~\cite{[Mer96]} et~\cite{[Pa]} (voir la proposition~\ref{prop:mmredJK}).

Le th\'eor\`eme~I d\'ecoule du th\'eor\`eme~II suivant, qui pr\'ecise, pour les courbes elliptiques semi-stables, les r\'esultats de~\cite{[Dav08]} (th\'eor\`eme~I),~\cite{[Cici]} (th\'eor\`eme~II) et (dans une forme non explicite)~\cite{[Mom]} (th\'eor\`eme~A).

\begin{TheoII}
On suppose que le corps~$K$ est galoisien sur~$\Q$ et qu'il existe une courbe elliptique~$E$ semi-stable sur~$K$ et un nombre premier~$p$ strictement supérieur à~$C_K$ tels que la représentation~$\phiEp$ est réductible.

Alors il existe une courbe elliptique~$E'$ définie sur~$K$, ayant des multiplications complexes sur~$K$ et partout bonne réduction sur~$K$,
 telle que que la semi-simplifi\'ee de la repr\'esentation~$\phiEp$ est isomorphe \`a la repr\'esentation~$\varphi_{E',p}$.
\end{TheoII}

On remarque que le type supersingulier du th\'eor\`eme~II de~\cite{[Cici]} n'appara\^it pas ici~; en effet, il ne se produit que pour des courbes elliptiques ayant r\'eduction additive aux places de~$K$ au-dessus de~$p$.

Dans tout le texte, on suppose que la courbe elliptique~$E$ est semi-stable sur~$K$  et que la repr\'esentation~$\phiEp$ est r\'eductible.
La courbe~$E$ poss\`ede alors un sous-groupe d'ordre~$p$ d\'efini sur~$K$.
On fixe un tel sous-groupe~$W$~; il lui est associ\'e une isog\'enie de~$E$ de degr\'e~$p$, d\'efinie sur~$K$.
L'action de~$G_K$ sur~$W(\aQ)$ est donn\'ee par un caract\`ere continu de~$G_K$ dans $\Fpx$~; on le note $\la$ et, suivant la terminologie introduite dans~\cite{[Maz]}, on l'appelle le caract\`ere d'isog\'enie associ\'e au couple~$(E,W)$.
On fixe \'egalement une base de $E_p$ dont le premier vecteur engendre~$W(\aQ)$~; dans cette base la matrice de la repr\'esentation~$\phiEp$ est triangulaire sup\'erieure, de caract\`eres diagonaux~$(\la, \kip\la^{-1})$.

La m\'ethode de d\'emonstration du th\'eor\`eme~I suit celle de~\cite{[Dav08]} et~\cite{[Cici]}, inspir\'ee de~\cite{[Mom]}.
Plut\^ot qu'appliquer les r\'esultats de~\cite{[Cici]} (ou~\cite{[Dav08]}) \`a une courbe elliptique semi-stable,
 on a choisi d'en retracer la d\'emonstration, qui donne une meilleure borne~$C_K$ (comparer les d\'efinitions~\ref{def:C2(K)}, \ref{def:C(K,n)} et~\ref{def:CK} du pr\'esent texte avec les d\'efinitions~2.10,~2.13 et~4.3 de~\cite{[Cici]})
 et permet d'exprimer directement la semi-simplifi\'ee de la repr\'esentation~$\phiEp$ au lieu de sa puissance douzi\`eme. 
N\'eanmoins, lorsque c'\'etait possible, on s'est r\'ef\'er\'e \`a des r\'esultats connus dans le cas g\'en\'eral, en pr\'ecisant l'am\'elioration propre aux courbes semi-stables.

La premi\`ere partie de l'article rappelle les propri\'et\'es locales du caract\`ere d'isog\'enie,
en l'occurence ses restrictions aux sous-groupes de d\'ecomposition des places finies de~$K$ hors de~$p$ et ses restrictions aux sous-groupes d'inertie des places de~$K$ au-dessus de~$p$.
 
 La deuxi\`eme partie regroupe des propri\'et\'es globales du caract\`ere d'isog\'enie et du corps~$K$ (dont on suppose \`a partir de ce point du texte qu'il est galoisien sur~$\Q$).
 On y trouve notamment la th\'eorie du corps de classes globale pour le carr\'e du caract\`ere d'isog\'enie, qui relie  entre elles les propri\'et\'es locales de la premi\`ere partie.
 
 La troisi\`eme partie comporte la d\'emonstration du th\'eor\`eme~II (partie~\ref{ssec:thm2}), dont le th\'eor\`eme~I est une cons\'equence directe.
  Enfin, on donne dans la partie~\ref{ssec:HypH} (th\'eor\`eme~III) une liste de corps satisfaisant les assertions du th\'eor\`eme~I.

\section{Propri\'et\'es locales du caract\`ere d'isog\'enie} \label{sec:proploc}

\subsection{Aux places finies hors de~$p$}

Soient~$q$ un nombre premier rationnel diff\'erent de~$p$ et~$\q$ un id\'eal premier de~$K$ au-dessus de~$q$.

\begin{proposition} \label{prop:inerhorsp}
Le caract\`ere~$\la$ est non ramifi\'e en~$\q$.
\end{proposition}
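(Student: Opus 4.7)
Puisque $E$ est semi-stable sur $K$, elle a en $\q$ soit bonne r\'eduction, soit r\'eduction multiplicative. On traite ces deux cas s\'epar\'ement, en examinant dans chacun l'action du sous-groupe d'inertie $\Iq$ sur le sous-groupe $W(\aQ)$.

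Dans le cas de bonne r\'eduction, comme $\q \nmid p$, le crit\`ere de N\'eron-Ogg-Shafarevich entra\^ine que $\Iq$ agit trivialement sur $E_p$ tout entier, donc sur $W(\aQ)$~: ainsi la restriction de $\la$ \`a $\Iq$ est triviale.

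Dans le cas de r\'eduction multiplicative, quitte \`a remplacer $\Kq$ par son extension non ramifi\'ee quadratique (ce qui laisse $\Iq$ inchang\'e), on peut supposer la r\'eduction d\'eploy\'ee et appliquer la th\'eorie des courbes de Tate. Celle-ci identifie $E_p$, comme $G_{\Kq}$-module, \`a une extension de $\Z/p\Z$ (module trivial) par $\mu_p$ (cyclotomique)~: dans une base adapt\'ee, la matrice de la repr\'esentation est triangulaire sup\'erieure, de caract\`eres diagonaux $\kip$ et~$1$, tous deux non ramifi\'es en $\q$ puisque $\q \nmid p$. Il reste \`a identifier la droite $W(\aQ)$~: si $\Iq$ agit trivialement sur $E_p$, toute droite Galois-stable convient et la restriction de $\la$ \`a $\Iq$ est triviale~; sinon, l'action de $\Iq$ est strictement unipotente non nulle et $\mu_p$ est alors l'unique droite stable sous $\Iq$, donc sous $G_{\Kq}$, ce qui donne $W(\aQ) = \mu_p$ et $\la = \kip$ sur $G_{\Kq}$, caract\`ere non ramifi\'e en~$\q$.

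Ce r\'esultat \'etant bien connu (notamment via les travaux de Serre et Mazur), aucune \'etape n'est r\'eellement d\'elicate~; la seule v\'erification un peu subtile est l'unicit\'e de la droite $\Iq$-stable dans le sous-cas de r\'eduction multiplicative ramifi\'ee, qui d\'ecoule d'un calcul direct sur une action unipotente non triviale sur un $\Fp$-espace vectoriel de dimension~$2$.
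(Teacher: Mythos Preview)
Your argument is correct and follows essentially the same route as the paper's own proof: the paper simply cites th\'eor\`eme~1 de~\cite{[SeTa]} (N\'eron--Ogg--Shafarevich) for the good reduction case and~\S1.12 de~\cite{[Se]} (the Tate curve description) for the multiplicative case, whereas you spell out explicitly the unipotent analysis of the $\Iq$-action in the latter. The content is the same.
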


\begin{proof}
Lorsque la courbe~$E$ a bonne r\'eduction en~$\q$, le r\'esultat est donn\'e par le th\'eor\`eme~1 de~\cite{[SeTa]}.

Lorsque la courbe~$E$ a r\'eduction multiplicative en~$\q$, il découle de la partie~\S1.12 de~\cite{[Se]}.
Pour un énoncé précis, on renvoie à la proposition~1.1.1 de~\cite{[Dav08]}, la proposition~3 de~\cite{[Krau]} ou la proposition~1.4 de~\cite{[Cici]} (avec l'entier~$\eq$ \'egal \`a~$1$, voir~\S1.1.1 de~\textit{loc. cit.}).
\end{proof}  

\begin{notations}\label{not:q}\ 
\begin{enumerate}
\item On fixe dans~$G_K$ un \'el\'ement de Frobenius~$\sq$ pour~$\q$~; comme le caract\`ere~$\la$ est d'image ab\'elienne et non ramifi\'e en~$\q$, l'image de~$\sq$ par~$\la$ est ind\'ependante de ce choix.
\item On note~$N\q$ le cardinal du corps r\'esiduel de~$K$ en~$\q$.
\end{enumerate}
\end{notations}

\begin{proposition} \label{prop:redmultq} 
On suppose que~$E$ a r\'eduction multiplicative en~$\q$.
Alors~$\la^2(\sq)$ vaut~$1$ ou~$(N\q)^2$ modulo~$p$.
\end{proposition}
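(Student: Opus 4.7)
Le plan est d'exploiter l'uniformisation de Tate de la courbe~$E$ en la place~$\q$. Dans le cas de r\'eduction multiplicative d\'eploy\'ee, on dispose d'un isomorphisme~$G_{\Kq}$-\'equivariant~$E(\aKq) \simeq \aKq^{\times}/q_E^{\Z}$, qui fournit sur la~$p$-torsion une suite exacte courte de~$G_{\Kq}$-modules, de sous-module~$\mu_p$ et de quotient trivial. Dans le cas non d\'eploy\'e, la courbe devient d\'eploy\'ee apr\`es l'extension quadratique non ramifi\'ee de~$\Kq$, et la m\^eme suite s'obtient apr\`es torsion des deux termes par l'unique caract\`ere quadratique non ramifi\'e non trivial~$\eta$ de~$G_{\Kq}$. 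Les caract\`eres diagonaux de la semi-simplifi\'ee de~$\phiEp|_{G_{\Kq}}$ sont donc~$(1, \kip)$ dans le cas d\'eploy\'e, et~$(\eta, \kip \eta)$ dans le cas non d\'eploy\'e.

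Puisque~$W$ est stable sous~$G_K$, il l'est \'egalement sous~$G_{\Kq}$, et le caract\`ere~$\la|_{G_{\Kq}}$ appara\^it parmi les caract\`eres diagonaux ci-dessus (que l'extension des~$G_{\Kq}$-modules soit triviale ou non, toute droite stable fournit un tel caract\`ere). D'apr\`es la proposition~\ref{prop:inerhorsp}, $\la$ est non ramifi\'e en~$\q$~; comme les quatre caract\`eres possibles sont eux aussi non ramifi\'es, l'\'egalit\'e a lieu entre caract\`eres non ramifi\'es de~$G_{\Kq}$, donc en particulier au Frobenius~$\sq$.

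Les identit\'es~$\kip(\sq) = N\q \Mod p$ et~$\eta(\sq) = -1$ donnent alors~$\la(\sq) \in \{1, N\q\}$ dans le cas d\'eploy\'e, et~$\la(\sq) \in \{-1, -N\q\}$ dans le cas non d\'eploy\'e. Dans tous les cas, en \'elevant au carr\'e, on obtient~$\la^2(\sq) \equiv 1$ ou~$(N\q)^2$ modulo~$p$, comme souhait\'e. La seule subtilit\'e attendue est la bonne prise en compte de la torsion quadratique non ramifi\'ee du cas non d\'eploy\'e~; en dehors de cela, l'argument est une lecture directe de la structure de Tate et pourrait aussi \^etre \'enonc\'e en r\'ef\'erant directement aux analyses locales des articles cit\'es dans la preuve de la proposition~\ref{prop:inerhorsp}.
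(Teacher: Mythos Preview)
Your proof is correct and follows exactly the standard route that the paper invokes by reference: the Tate uniformization of~$E$ over~$\Kq$ (Serre~\cite{[Se]}~\S1.12), giving semi-simplification~$(\kip,1)$ in the split case and~$(\kip\eta,\eta)$ in the non-split case, whence~$\la(\sq)\in\{\pm 1,\pm N\q\}$ and the conclusion after squaring. The paper itself gives no details beyond citing~\cite{[Dav08]} prop.~1.1.1 and~\cite{[Cici]} prop.~1.4, which carry out precisely this argument.
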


\begin{proof}
Voir par exemple la proposition~1.1.1 de~\cite{[Dav08]}  ou la proposition~1.4 de~\cite{[Cici]}.
\end{proof}

On rappelle \'egalement les r\'esultats de~\cite{[SeTa]} qui serviront dans la suite (voir aussi la proposition~1.1.2 de~\cite{[Dav08]} ou la partie~1.3.2 de~\cite{[Cici]}).
\begin{proposition}[Th\'eor\`eme~3 de~\cite{[SeTa]}] \label{prop:frobhorspbonST}
On suppose que~$E$ a bonne r\'eduction en~$\q$. Alors le polyn\^ome caract\'eristique de l'action de~$\sq$ sur le module de Tate en~$p$ de~$E$ est \`a coefficients dans~$\Z$ (et ind\'ependant de~$p$)~; ses racines ont pour valeur absolue complexe~$\sqrt{ N\q}$.
\end{proposition}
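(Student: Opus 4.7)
Le r\'esultat est une cons\'equence directe de la th\'eorie de la r\'eduction bonne d'une courbe elliptique, combin\'ee aux propri\'et\'es de l'endomorphisme de Frobenius sur un corps fini. Voici l'approche que je suivrais.

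D'abord, j'utiliserais le crit\`ere de N\'eron--Ogg--Shafarevich (th\'eor\`eme~1 de~\cite{[SeTa]}, d\'ej\`a invoqu\'e dans la proposition~\ref{prop:inerhorsp}) pour obtenir que l'inertie~$\Iq$ agit trivialement sur le module de Tate $T_p(E) = \limproj E_{p^n}$, puisque $E$ a bonne r\'eduction en~$\q$ et que $p$ est diff\'erent de la caract\'eristique r\'esiduelle de~$\q$. L'action de~$\sq$ sur $T_p(E)$ est donc bien d\'efinie, ind\'ependamment du rel\`evement choisi dans~$\Dq$.

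Ensuite, je noterais~$\widetilde{E}$ la r\'eduction de~$E$ modulo~$\q$, qui est une courbe elliptique sur le corps r\'esiduel~$\kq$ de cardinal~$N\q$. La th\'eorie de la r\'eduction donne un isomorphisme $G_{\kq}$-\'equivariant entre $T_p(E)$ (avec l'action de $\sq$) et $T_p(\widetilde{E})$ (avec l'action du Frobenius g\'eom\'etrique~$\pi_{\widetilde{E}}$ de $\widetilde{E}/\kq$). Le polyn\^ome caract\'eristique cherch\'e est donc celui de~$\pi_{\widetilde{E}}$ agissant sur $T_p(\widetilde{E})$.

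Ce polyn\^ome est $X^2 - a_{\q} X + N\q$, o\`u $a_{\q} = N\q + 1 - \#\widetilde{E}(\kq)$ est un entier qui ne d\'epend que de~$\widetilde{E}$, donc ind\'ependant de~$p$~; on le voit en remarquant que $\pi_{\widetilde{E}}$ satisfait, dans l'anneau $\Endo(\widetilde{E})$, une \'equation quadratique \`a coefficients entiers, et en calculant son d\'eterminant (qui vaut $N\q$, le degr\'e du Frobenius) et sa trace via $\#\widetilde{E}(\kq) = \deg(1 - \pi_{\widetilde{E}})$. Enfin, pour la condition sur le module complexe des racines, j'appliquerais l'in\'egalit\'e de Hasse $|a_{\q}| \le 2\sqrt{N\q}$ (hypoth\`ese de Riemann pour les courbes elliptiques sur un corps fini), qui montre que le discriminant $a_{\q}^2 - 4 N\q$ est n\'egatif ou nul et donc que les racines, complexes conjugu\'ees, sont de module $\sqrt{N\q}$. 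L'unique point d\'elicat est cette derni\`ere in\'egalit\'e~: tout le reste est formel une fois la bonne r\'eduction acquise.
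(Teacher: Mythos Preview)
Your sketch is correct and follows the standard argument: reduce via N\'eron--Ogg--Shafarevich to the Frobenius endomorphism on the reduced curve over~$\kq$, identify the characteristic polynomial as $X^2 - a_{\q}X + N\q$ with $a_{\q} \in \Z$ independent of~$p$, and conclude via the Hasse bound. There is nothing to fault.

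Note however that the paper does not give a proof of this proposition at all: it is stated as a direct citation of th\'eor\`eme~3 of~\cite{[SeTa]} (the attribution is in the proposition header itself), and the text moves on immediately to the notations~\ref{not:Lq}. So there is no ``paper's own proof'' to compare against --- you have supplied more than the paper does, and what you supplied is the classical route.
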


\begin{notations}\label{not:Lq}\ 
\begin{enumerate}
\item On note~$\Pq(X)$ le polyn\^ome caract\'eristique de l'action de~$\sq$ sur le module de Tate en~$p$ de~$E$.
Il est de la forme~$X^2 - \Tq X + N\q$ avec $\Tq$ un entier de valeur absolue inf\'erieure ou \'egale \`a~$2\sqrt{N\q}$.
Son discriminant~$\Tq^2 - 4N\q$ est donc un entier n\'egatif et ses racines sont conjugu\'ees complexes l'une de l'autre.
\item On note~$\Lq$ le sous-corps engendr\'e dans~$\C$ par les racines de~$\Pq(X)$~; le corps~$\Lq$ est soit~$\Q$ soit un corps quadratique imaginaire.
\end{enumerate}
\end{notations}

\begin{proposition} \label{prop:frobhorspbon}
On suppose que~$E$ a bonne r\'eduction en~$\q$. 
Soit~$\Pcq$ un id\'eal premier de~$\Lq$ au-dessus de~$p$.
Alors les images dans~$\OLq / \Pcq$ des racines de~$\Pq(X)$ sont dans~$\Fpx$~;
 il existe une racine~$\bq$ de~$\Pq(X)$ v\'erifiant~:
 $$
 \left( \la(\sq) , \left( \kip\la^{-1} \right) (\sq) \right) = \left( \bq \Mod \Pcq , \bqb \Mod \Pcq \right).
 $$
\end{proposition}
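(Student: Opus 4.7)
Mon approche combinerait la proposition~\ref{prop:frobhorspbonST} de Serre-Tate avec l'hypoth\`ese de r\'eductibilit\'e de la repr\'esentation $\phiEp$. Je commencerais par observer que, d'apr\`es cette proposition, $\Pq(X)$ est le polyn\^ome caract\'eristique de l'action de $\sq$ sur le module de Tate en~$p$ de~$E$; par r\'eduction modulo~$p$, $\Pq(X) \bmod p$ est donc le polyn\^ome caract\'eristique de l'action de~$\sq$ sur l'espace~$E_p$.

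J'utiliserais ensuite la r\'eductibilit\'e de $\phiEp$~: dans la base fix\'ee, sa matrice est triangulaire sup\'erieure, de caract\`eres diagonaux $(\la, \kip\la^{-1})$. Les valeurs propres de~$\sq$ sur $E_p$ sont donc $\la(\sq)$ et $(\kip\la^{-1})(\sq)$, tous deux \'el\'ements de~$\Fpx$. On en d\'eduit imm\'ediatement la factorisation
\[
\Pq(X) \equiv (X - \la(\sq))\bigl(X - (\kip\la^{-1})(\sq)\bigr) \pmod{p}.
\]

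La derni\`ere \'etape consisterait \`a remonter cette factorisation \`a~$\OLq$. Les deux racines complexes conjugu\'ees $\bq$ et $\bqb$ de~$\Pq(X)$ sont des \'el\'ements de~$\OLq$, et leurs images dans le quotient $\OLq / \Pcq$ sont exactement les racines de $\Pq(X) \bmod \Pcq$, elles-m\^emes \'egales \`a $\la(\sq)$ et $(\kip\la^{-1})(\sq)$ vues dans le corps r\'esiduel via l'inclusion canonique $\Fp \hookrightarrow \OLq/\Pcq$. Je nommerais alors $\bq$ la racine v\'erifiant $\bq \equiv \la(\sq) \pmod{\Pcq}$~; la relation $\bq \bqb = N\q \equiv \kip(\sq) \pmod{p}$ impose automatiquement $\bqb \equiv (\kip\la^{-1})(\sq) \pmod{\Pcq}$, ce qui fournit l'\'enonc\'e souhait\'e.

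Le seul point \'eventuellement d\'elicat concerne le cas o\`u $\la(\sq) = (\kip\la^{-1})(\sq)$ dans $\Fpx$ (ce qui se produit lorsque $p$ ramifie dans~$\Lq$, ou lorsque $\Tq^2 - 4N\q \equiv 0 \pmod{p}$)~: on choisit alors indiff\'eremment l'une ou l'autre racine, la compatibilit\'e avec le produit $\bq \bqb = N\q$ restant toujours assur\'ee. La d\'emonstration ne pr\'esente donc pas d'obstacle majeur, les deux ingr\'edients cl\'es (th\'eor\`eme de Serre-Tate et lecture des valeurs propres sur la forme triangulaire) \'etant imm\'ediatement disponibles.
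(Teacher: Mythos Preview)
Your argument is correct and follows exactly the route the paper has in mind: the paper's proof is a one-line reference to the reducibility of~$\phiEp$ (with a pointer to \cite{[Dav08]} and \cite{[Cici]}), and what you have written is precisely the unpacking of that reference---reading off the eigenvalues of~$\sq$ on~$E_p$ from the triangular form and matching them to the reductions of~$\bq,\bqb$ modulo~$\Pcq$.
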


\begin{proof}
Cela résulte de la réductibilité de la représentation~$\phiEp$
(voir aussi la proposition~1.1.2 de~\cite{[Dav08]} ou la proposition~1.8 de~\cite{[Cici]}).
\end{proof}

\subsection{Aux places finies au-dessus de~$p$~: action de l'inertie}

Soit~$\p$ un id\'eal premier de~$K$ situ\'e au-dessus de~$p$.
On fixe dans~$G_K$ un sous-groupe d'inertie~$\Ip$ pour~$\p$.

 \begin{proposition} \ 
 \begin{enumerate}
 \item On suppose que~$E$ a r\'eduction multiplicative en~$\p$. Alors $\la$ restreint \`a~$\Ip$ est trivial ou \'egal \`a~$\kip$.
\item On suppose que~$E$ a bonne r\'eduction en~$\p$. Alors cette r\'eduction est ordinaire et $\la$ restreint \`a~$\Ip$ est trivial ou \'egal \`a~$\kip$.
\end{enumerate}
\end{proposition}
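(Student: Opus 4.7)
The plan is to analyze the restriction of $\phiEp$ to $G_{\Kp}$ in each case, then exploit the fact that the $G_K$-stable line $W(\aQ)$ is \emph{a fortiori} stable under this smaller group, which forces $\la|_{\Ip}$ into one of two mutually exclusive positions in a canonical two-step filtration of $E_p$.

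For assertion~(1), the key tool is the Tate uniformization. After possibly passing to the unramified quadratic extension of $\Kp$ (in the non-split case), the curve $E$ becomes isomorphic to the Tate curve $E_{q_E}$ for some parameter $q_E$ of positive $\p$-adic valuation, yielding a short exact sequence of $G_{\Kp}$-modules
$$0 \to \mu_p \to E_p \to \Z/p\Z \to 0$$
on which $\Ip$ acts by $\kip$ on the kernel and trivially on the quotient (the unramified twist from the non-split case does not affect the $\Ip$-action). Since $W(\aQ)$ is a $G_{\Kp}$-stable line in $E_p$, either $W(\aQ) = \mu_p$, giving $\la|_{\Ip} = \kip|_{\Ip}$, or $W(\aQ) \cap \mu_p = 0$, in which case $W(\aQ)$ maps isomorphically onto the étale quotient and $\la|_{\Ip}$ is trivial.

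For assertion~(2), I would first rule out supersingular reduction. If $E$ had supersingular reduction at $\p$, the theorem of Serre--Raynaud on the inertia action on the $p$-torsion describes $E_p|_{\Ip}$ as semisimple, given by $\psi \oplus \psi^p$ with $\psi$ a fundamental character of level~$2$; this uses crucially that $p$ is unramified in $K$, so $\ep = 1 < p-1$ and one is in the tame regime where fundamental characters describe the action. The two characters $\psi$ and $\psi^p$ take values in $\mathbb{F}_{p^2}^\times \setminus \Fpx$ and are swapped by Frobenius, so $E_p$ is an irreducible $G_{\Kp}$-module, contradicting the existence of the $G_{\Kp}$-stable line $W(\aQ)$. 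Hence the reduction must be ordinary. In the ordinary case, the connected-étale sequence of the finite flat group scheme $E[p]$ over $\mathcal{O}_{\Kp}$ provides, up to an unramified character, the same short exact sequence
$$0 \to \mu_p \to E_p \to \Z/p\Z \to 0$$
of $G_{\Kp}$-modules, and the same dichotomy on the position of $W(\aQ)$ relative to $\mu_p$ concludes.

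The main obstacle is the rigorous exclusion of the supersingular case; once it is handled, the rest is bookkeeping on a canonical filtration, and the conclusion follows directly from $\Ip$ acting by $\kip$ on the $\mu_p$-part and trivially on the étale quotient. The unramifiedness of $p$ in $K$ — an assumption built into the setup — is used exactly at the point where one invokes the description via level-$2$ fundamental characters.
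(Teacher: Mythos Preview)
Your proposal is correct and follows essentially the same approach as the paper: both arguments reduce to the standard structural results of Serre (\S1.11--1.12 of~\cite{[Se]}) on the shape of $\phiEp$ restricted to inertia at~$\p$, namely Tate uniformization in the multiplicative case, the connected-\'etale filtration in the ordinary case, and the level-$2$ fundamental character description to exclude supersingular reduction. The only cosmetic difference is in the supersingular step: the paper phrases the contradiction as an order count (the image of~$\Ip$ is cyclic of order~$p^2-1$, which cannot divide $|\GL_2(\Fp)|$ restricted to a Borel, i.e.\ $p(p-1)^2$), whereas you phrase it as irreducibility of~$E_p$ over~$\Fp$ under~$\Ip$; these are two formulations of the same fact that the image of inertia is a non-split Cartan.
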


\begin{proof}
Lorsque~$E$ a r\'eduction multiplicative en~$\p$, le r\'esultat est donn\'e par la proposition~13 (et son corollaire) de~\cite{[Se]},~\S1.12.

 Lorsque~$E$ a bonne r\'eduction en~$\p$, on suppose d'abord par l'absurde que~$E$ a r\'eduction supersinguli\`ere en~$\p$.
 Alors, par la proposition~12 (\S1.11(2)) de~\cite{[Se]}, l'image par~$\Ip$ de~$\phiEp$ est un groupe cyclique d'ordre~$p^2 - 1$. Ceci contredit l'hypoth\`ese que la repr\'esentation~$\phiEp$ est r\'eductible et que l'ordre de son image divise donc~$p(p-1)^2$.
 La courbe~$E$ a ainsi réduction ordinaire en~$\p$~; le résultat découle alors de la  proposition~11 et son corollaire dans~\cite{[Se]} (\S1.11(1)) (voir aussi la proposition~3 de~\cite{[Krau]}).
\end{proof}

\begin{notation} \label{not:ap}
On note~$\ap$ l'entier, \'egal \`a~$0$ ou~$1$, tel que~$\la$ restreint au sous-groupe d'inertie~$\Ip$ co\"incide avec~$\kip^{\ap}$.
\end{notation}

\section{Des propri\'et\'es globales}\label{sec:glob}

\subsection{Th\'eorie du corps de classes pour le caract\`ere~$\la^2$}\label{cdc}  

La th\'eorie du corps de classes globale associe au caract\`ere ab\'elien~$\la^2$ de~$G_K$ dans~$\Fpx$ un morphisme de groupes du groupe des id\`eles~$\Ax_K$ de~$K$ dans~$\Fpx$ qui est continu et trivial sur les id\`eles diagonales.
On note~$r$ ce morphisme.

Pour toute place (finie ou infinie)~$\nu$ de~$K$, on note~$K_{\nu}$ le compl\'et\'e de~$K$ en~$\nu$ et~$r_{\nu}$ la compos\'ee de l'injection de~$K_{\nu}^{\times}$ dans les id\`eles~$\Ax_{K}$ et du morphisme~$r$.
Lorsque~$\nu$ est une place finie, on note~$U_{K_{\nu}}$ les unit\'es du corps local~$K_{\nu}$~; dans ce cas, on utilise indiff\'eremment en indice la place~$\nu$ et l'id\'eal maximal de~$K$ qui lui correspond.

Les propri\'et\'es locales du caract\`ere~$\la^2$ \'etablies dans la partie~\ref{sec:proploc} ont les interpr\'etations suivantes pour le morphisme~$r$
(voir \'egalement~\cite{[Dav08]} \S2.1,~\cite{[Cici]} \S2.1 ou~\cite{[Bil]} \S2.3) pour des discussions semblables)~:
\begin{enumerate}
\item[(i)] pour une place infinie~$\nu$ de~$K$, l'application~$r_{\nu}$ est triviale~;
\item[(ii)] pour un id\'eal maximal~$\q$ de~$K$ qui n'est pas au-dessus de~$p$, l'application~$r_{\q}$ envoie toute uniformisante de~$\Kq$ sur~$\la^2(\sq)$ et sa restriction \`a~$U_{\Kq}$ est triviale~;
\item[(iii)] pour un id\'eal maximal~$\p$ de~$K$ au-dessus de~$p$, la restriction de~$r_{\p}$ \`a~$U_{\Kp}$ co\"incide avec la compos\'ee d'applications
$$
U_{\Kp} \xrightarrow{N_{\Kp/\Qp}} U_{\Qp} \xrightarrow[\text{modulo } p]{\text{r\'eduction}} \Fpx \xrightarrow[\text{\`a la puissance } - 2 \ap]{\text{\'el\'evation}} \Fpx.
$$
\end{enumerate}

\begin{notations} Pour toute place~$\nu$ de~$K$, on note~$\iota_{\nu}$ le plongement de~$K$ dans le compl\'et\'e~$K_{\nu}$. Pour tout id\'eal maximal~$\mathfrak{L}$ de~$K$, on note~$\mathrm{val}_{\mathfrak{L}}$ la valuation de~$K$ associ\'ee \`a~$\mathfrak{L}$ dont l'image est~$\Z$.
\end{notations}

\begin{proposition}\label{prop:recla2}
Soit~$\al$ un \'el\'ement de~$K$ non nul et premier \`a~$p$~;
 alors on a~:
 $$
 \prod\limits_{\q \nmid p} \la^2 \left(\sq\right)^{\mathrm{val}_{\q}(\al)} =
 \prod_{\p | p} N_{\Kp/\Qp}\left(\iota_{\p}(\al)\right)^{2\ap} \Mod p.
 $$
\end{proposition}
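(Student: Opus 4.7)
Mon approche consisterait \`a appliquer la trivialit\'e du morphisme de r\'eciprocit\'e~$r$ sur les id\`eles principales. Concr\`etement, l'\'el\'ement~$\al$ de~$K^{\times}$ d\'efinit, via les plongements~$\iota_\nu$, une id\`ele diagonale~$(\iota_\nu(\al))_\nu \in \Ax_K$~; par construction de~$r$ comme image du caract\`ere ab\'elien~$\la^2$ par la r\'eciprocit\'e globale, on a donc
$$
\prod_\nu r_\nu(\iota_\nu(\al)) = 1 \text{ dans } \Fpx.
$$
Ce produit est fini~: pour presque toute place finie~$\q$, on a~$\val_\q(\al) = 0$, donc~$\iota_\q(\al)$ est une unit\'e~$\q$-adique sur laquelle~$r_\q$ est triviale d'apr\`es la description locale~(ii).

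Je d\'ecouperais ensuite ce produit selon les trois types de places et utiliserais les descriptions locales~(i),~(ii) et~(iii) rappel\'ees juste avant l'\'enonc\'e. Les places archim\'ediennes contribuent trivialement par~(i). Pour un id\'eal maximal~$\q$ de~$K$ hors de~$p$, j'\'ecrirais~$\iota_\q(\al) = u_\q \, \pi_\q^{\val_\q(\al)}$ avec~$u_\q$ une unit\'e de~$\Kq$ et~$\pi_\q$ une uniformisante~; par multiplicativit\'e de~$r_\q$ et par~(ii), la contribution locale en~$\q$ est exactement~$\la^2(\sq)^{\val_\q(\al)}$. Pour un id\'eal~$\p$ au-dessus de~$p$, l'hypoth\`ese que~$\al$ est premier \`a~$p$ assure que~$\iota_\p(\al)$ est une unit\'e~$\p$-adique, et~(iii) fournit directement la contribution~$r_\p(\iota_\p(\al)) = N_{\Kp/\Qp}(\iota_\p(\al))^{-2\ap} \Mod p$.

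En reportant ces trois calculs dans l'\'egalit\'e globale~$\prod_\nu r_\nu(\iota_\nu(\al)) = 1$ et en faisant passer les contributions des places divisant~$p$ dans l'autre membre (ce qui revient \`a prendre l'inverse dans~$\Fpx$), on obtient exactement la formule de l'\'enonc\'e. Il n'y a pas ici d'obstacle technique r\'eel~: l'argument se r\'eduit \`a une application propre de la formule produit pour~$r$, et le seul point demandant un soin mod\'er\'e est le changement de signe entre l'exposant~$-2\ap$ issu de~(iii) et l'exposant~$+2\ap$ de l'\'enonc\'e, qui provient simplement du passage \`a l'inverse dans le groupe multiplicatif~$\Fpx$.
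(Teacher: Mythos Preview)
Your proposal is correct and follows essentially the same approach as the paper: both apply the triviality of~$r$ on the principal id\`ele~$(\iota_\nu(\al))_\nu$, split the product according to the three types of places, and invoke the local descriptions~(i),~(ii),~(iii) to identify each factor. The paper's write-up is slightly more compressed (it does not spell out the uniformizer--unit decomposition at~$\q \nmid p$), but the argument is identical in substance.
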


\begin{proof}
La d\'emonstration repose sur le m\^eme principe que celle du lemme~1 de~\cite{[Mom]} (voir aussi la proposition~ 2.2.1 de~\cite{[Dav08]} ou la proposition~2.4 de~\cite{[Cici]}).
L'image par~$r$ de l'id\`ele principale~$(\iota_{\nu}(\al))_{\nu}$ \'etant triviale, on a 
 (tous les produits \'etant finis)~:
$$
\begin{array}{r c l}
 1 & = &  r \left(\left(\iota_{\nu}(\al)\right)_{\nu}\right) 
    =  \prod\limits_{\nu}  r_{\nu}\left(\iota_{\nu}(\al)\right) 
   =  \prod\limits_{\nu | \infty} 1 \times \prod\limits_{\q \nmid p}  r_{\q}\left(\iota_{\q}(\al)\right) \times \prod\limits_{\p | p} r_{\p}\left(\iota_{\p}(\al)\right) \\
   & = & \prod\limits_{\q \nmid p} \la^2 (\sq )^{\mathrm{val}_{\q}(\al)} \times \prod\limits_{\p | p}  \left( N_{K_{\p}/\Q_{\p}}\left(\iota_{\p}(\alpha)\right)^{ - 2\ap} \Mod  p \right) .\\
\end{array}
$$
\end{proof}

\begin{notations}\label{not:po}\ 
Pour toute la suite du texte~:
\begin{itemize}
\item[$\bullet$] on suppose que l'extension~$K/\Q$ est galoisienne et on note~$G$ son groupe de~Galois~;
\item[$\bullet$] on fixe un id\'eal~$\po$ de~$K$ au-dessus de~$p$~;
\item[$\bullet$] pour tout \'el\'ement~$\tau$ de~$G$, on note~$\ato$ l'entier~$\ap$ associ\'e \`a l'id\'eal~$\p = \tau^{-1}(\po)$~;
\item[$\bullet$] on note~$\cN$ l'application de~$K$ dans lui-m\^eme qui envoie un \'el\'ement~$\alpha$ sur la norme tordue par les entiers~$(2\ato)_{\tau \in G}$
(c'est-\`a-dire qu'on a~$\cN (\al) = \prod_{\tau \in G} \tau(\alpha)^{2\ato}$)~; 
on remarque que l'application~$\cN$ pr\'eserve~$K^{\times}$,~$\OK$ et les \'el\'ements de~$K$ premiers \`a~$p$. 
\end{itemize}
\end{notations}

Avec ces notations, la proposition~\ref{prop:recla2} admet la reformulation globale suivante. 

\begin{proposition} \label{prop:recmuglob}
 Soit~$\al$ un \'el\'ement de~$K$ non nul et premier \`a~$p$~;
 alors on a~:
 $$
 \prod\limits_{\q \nmid p} \la^2 \left(\sq\right)^{\mathrm{val}_{\q}(\al)} = \iota_{\po} \left( \cN (\al) \right) \Mod \po.
 $$
\end{proposition}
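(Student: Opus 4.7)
The plan is to start from Proposition~\ref{prop:recla2} and show that its right-hand side coincides with \(\iota_{\po}(\cN(\al)) \bmod \po\), using the hypothesis that \(K/\Q\) is galoisienne. Since \(G = \Gal(K/\Q)\) acts transitivement on the primes of \(K\) above \(p\), I would fix, for each such prime \(\mathfrak{p}\), an element \(\tau_{\mathfrak{p}} \in G\) with \(\tau_{\mathfrak{p}}(\mathfrak{p}) = \po\). This \(\tau_{\mathfrak{p}}\) extends by continuity to a \(\Qp\)-algebra isomorphism \(\tilde\tau_{\mathfrak{p}}\colon \Kp \xrightarrow{\sim} K_{\po}\) satisfying \(\tilde\tau_{\mathfrak{p}} \circ \iota_{\mathfrak{p}} = \iota_{\po} \circ \tau_{\mathfrak{p}}\) on \(K\). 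Since a \(\Qp\)-algebra isomorphism preserves the local norm, I obtain
\[
N_{\Kp/\Qp}\bigl(\iota_{\mathfrak{p}}(\al)\bigr) \;=\; N_{K_{\po}/\Qp}\bigl(\iota_{\po}(\tau_{\mathfrak{p}}(\al))\bigr).
\]

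The next step unfolds the right-hand side as an explicit product over \(G\). Because \(K/\Q\) is galoisienne, the canonical map \(D_{\po} \to \Gal(K_{\po}/\Qp)\) is an isomorphism, and for \(\delta \in D_{\po}\) the extension \(\tilde\delta\) to \(K_{\po}\) satisfies \(\tilde\delta \circ \iota_{\po} = \iota_{\po} \circ \delta\) on \(K\); hence
\[
N_{K_{\po}/\Qp}\bigl(\iota_{\po}(\tau_{\mathfrak{p}}(\al))\bigr) \;=\; \prod_{\delta \in D_{\po}} \iota_{\po}\bigl((\delta \tau_{\mathfrak{p}})(\al)\bigr).
\]
As \(\mathfrak{p}\) ranges over the primes of \(K\) above \(p\), the left cosets \(D_{\po} \tau_{\mathfrak{p}}\) partition \(G\); moreover, for any \(\tau = \delta\tau_{\mathfrak{p}}\) in the coset attached to \(\mathfrak{p}\), the integer defined in Notations~\ref{not:po} satisfies \(\ato = a_{\tau^{-1}(\po)} = \ap\). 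Assembling these pieces yields the identity
\[
\prod_{\mathfrak{p} \mid p} N_{\Kp/\Qp}\bigl(\iota_{\mathfrak{p}}(\al)\bigr)^{2 \ap} \;=\; \prod_{\tau \in G} \iota_{\po}\bigl(\tau(\al)\bigr)^{2 \ato} \;=\; \iota_{\po}\bigl(\cN(\al)\bigr),
\]
an equality inside \(\Z_p^{\times} \subset K_{\po}^{\times}\) (the membership in \(\Z_p^{\times}\), which a priori is not evident from the definition of \(\cN\), is a by-product of the computation, since each factor on the left lies in \(\Z_p^{\times}\) because \(\al\) is premier à \(p\)). Reducing modulo \(\po\) and combining with Proposition~\ref{prop:recla2} would then give the conclusion.

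The main step is purely organisational: no new arithmetic input is needed beyond the extension \(\tilde\tau_{\mathfrak{p}}\) and the isomorphism \(D_{\po} \cong \Gal(K_{\po}/\Qp)\). What makes the reformulation meaningful is the galoisian hypothesis on \(K/\Q\): it is this hypothesis that allows the local exponents \(\ap\) to be repackaged into a single \(G\)-indexed family \((\ato)_{\tau \in G}\) defining the twisted norm \(\cN\), and it is also what identifies each decomposition group with the Galois group of the corresponding completion, so that the product of local norms assembles cleanly into a product over \(G\) seen through the fixed embedding \(\iota_{\po}\).
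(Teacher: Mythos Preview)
Your proposal is correct and follows exactly the same approach as the paper: the paper's proof is a single sentence asserting that $\prod_{\p\mid p} N_{\Kp/\Qp}(\iota_{\p}(\al))^{2\ap}\in\Zp$ coincides with $\iota_{\po}(\cN(\al))$, and you have simply unpacked this identity via the coset decomposition $G=\bigsqcup_{\p\mid p} D_{\po}\tau_{\p}$ and the identification $D_{\po}\cong\Gal(K_{\po}/\Qp)$.
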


\begin{proof}
L'\'el\'ement~$\prod_{\p | p} N_{\Kp/\Qp}\left(\iota_{\p}(\al)\right)^{2\ap}$  de~$\Zp$ est l'image par l'injection canonique~$\iota_{\po}$ de~$K$ dans son compl\'et\'e~$K_{\po}$ de l'\'el\'ement~$\prod_{\tau \in G} \tau(\al)^{2\ato}$ de~$K$. 
\end{proof}

\subsection{Borne pour la hauteur d'un g\'en\'erateur d'id\'eal}\label{ssec:BuGy}

Cette partie reprend les r\'esultats de la partie~2.3 de~\cite{[Dav08]} (voir aussi les parties~2.3 et~2.4 de~\cite{[Cici]}) qui donnent une borne pour toutes les valeurs absolues archim\'ediennes de la norme tordue~$\cN$ d'un certain g\'en\'erateur d'id\'eal dans~$\OK$.

\begin{notations}
On note~$d_K$ le degré du corps~$K$ sur~$\Q$ et~$h_K$ son nombre de classes d'idéaux.
Soit~$\al$ un \'el\'ement non nul de~$K$. On note~$\h$ la hauteur absolue de~$\al$ d\'efinie par
$$
\h(\al) =  \left( \prod  \limits_{\nu \text{ place de } K}  \max \left(1, \left| \al \right|_{\nu} \right)\right)^{1/d_K},
$$
avec les normalisations suivantes pour les valeurs absolues~$| \cdot |_{\nu}$~:
\begin{itemize}
\item[$\bullet$] si~$\nu$ est une place r\'eelle, associ\'ee \`a un \'el\'ement~$\tau$ de~$G$,~$| \cdot |_{\nu} = | \tau(\cdot) |_{\C}$~;
\item[$\bullet$] si~$\nu$ est une place complexe, associ\'ee \`a un \'el\'ement~$\tau$ de~$G$,~$| \cdot |_{\nu} = | \tau(\cdot) |^2_{\C}$~;
\item[$\bullet$] si~$\nu$ est une place finie, associ\'ee \`a un id\'eal premier~$\mathfrak{L}$ de $K$,~$| \cdot |_{\nu} = (N\mathfrak{L})^{ -  \mathrm{val}_{\mathfrak{L}}(\cdot)}$.
\end{itemize}
Lorsque~$\al$ est un entier de~$K$, les seules places apportant une contribution non triviale dans le produit d\'efinissant~$ \h ( \al ) $ sont les places infinies.
\end{notations}

\begin{lemme}\label{lem:hautalpha}
Soit~$\al$ un \'el\'ement non nul de~$\OK$~; alors, pour tout~$\tau$ dans~$G$, on~a~:
$$
\left| \tau \left( \cN(\al) \right) \right|_{\C} \leq \h(\al) ^{2d_K}.
$$
\end{lemme}

\begin{proof}
La d\'emonstration repose sur un calcul semblable \`a celui de la démonstration de la proposition~2.3.1 de~\cite{[Dav08]}
(voir aussi le lemme~2 (partie~2.3) de~\cite{[Cici]}), 
\`a ceci pr\`es que les entiers intervenant en exposant dans la d\'efinition de~$\cN$ valent ici~$0$ ou~$2$, alors qu'ils sont compris entre~$0$ et~$12$ dans \textit{loc. cit.}.
\end{proof}

\begin{notations}\label{not:C1} Suivant~\cite{[BuGy]}, on note~:
\begin{itemize}
\item[$\bullet$] $R_K$ le r\'egulateur de $K$~;
\item[$\bullet$] $r_K$ le rang du groupe des unit\'es de $K$ ($K$ \'etant suppos\'e galoisien sur~$\Q$,~$r_K$ vaut $d_K - 1$ si $K$ est totalement r\'eel et~$\frac{d_K}{2}-1$ sinon)~;
\item[$\bullet$] $\delta_K$ un r\'eel strictement positif minorant~$d_K \ln \left( \h(\al) \right)$ pour  tout \'el\'ement non nul~$\al$ de~$K$ qui n'est pas une racine de l'unit\'e~; 
si~$d_K$ est \'egal \`a~$2$, on peut prendre~$\delta_K$ \'egal \`a~$\frac{\ln 2}{r_K +1}$ ; si~$d_K$ est sup\'erieur ou \'egal \`a $3$, on peut prendre~$\delta_K$ \'egal \`a~$\frac{1}{53d_K\ln(6d_K)}$ ou  $\frac{1}{1201}\left(\frac{\ln \left( \ln d_K  \right) }{\ln d_K}\right)^3$.
\end{itemize}
\end{notations}

\begin{definition}[Borne~$C_1(K) $]\label{def:C1(K)}
On pose~:
$$
C_1(K) = \frac{r_K^{r_K +1} \delta_K^{-(r_K -1)}}{2}.
$$
\end{definition}
On remarque que~$C_1(K)$ peut s'exprimer en n'utilisant que le degr\'e~$d_K$ de~$K$ sur~$\Q$. Avec ces notations, le lemme~$2$ (partie~$3$) de~\cite{[BuGy]} s'\'ecrit de la mani\`ere suivante.
 
\begin{lemme}\label{lem:Yann}
Pour tout \'el\'ement non nul~$\al$ de~$\OK$, il existe une unit\'e~$u$ de~$K$ v\'erifiant~:
$$
\h(u\al) \leq \left| N_{K/\Q}(\al) \right|^{1/d_K} \exp\left( C_1(K)R_K \right).
$$
\end{lemme}

\begin{definition}[Borne~$C_2(K)$]\label{def:C2(K)}
On pose~:
$$
C_2(K) = \exp \left( 2 d_K C_1(K) R_K \right).
$$
\end{definition}
On remarque que~$C_2(K)$ ne d\'epend que du degr\'e et du r\'egulateur de~$K$.

\begin{proposition}\label{prop:hautgen}
 Soit~$\mathfrak{L}$ un id\'eal maximal de~$K$.
Il existe un g\'en\'erateur~$\gamma_{\mathfrak{L}}$ de~$\mathfrak{L}^{h_K}$ satisfaisant pour tout~$\tau$ dans~$G$~:
$$
\left| \tau \left( \cN(\gamma_{\mathfrak{L}}) \right) \right|_{\C} \leq \left(N \mathfrak{L} \right)^{2h_K} C_2(K).
$$
\end{proposition}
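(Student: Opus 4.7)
The plan is to combine the two preceding lemmas in a direct way. There is no real obstacle: the proposition is essentially a conjunction of Lemma~\ref{lem:hautalpha} and Lemma~\ref{lem:Yann}, applied to a well-chosen generator.

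First, since~$\mathfrak{L}^{h_K}$ is a principal ideal (by definition of the class number~$h_K$), I would pick any generator~$\alpha_0$ of~$\mathfrak{L}^{h_K}$ in~$\OK$. Its absolute norm is controlled by
\[
\left| N_{K/\Q}(\alpha_0) \right| = N(\mathfrak{L}^{h_K}) = (N\mathfrak{L})^{h_K}.
\]
Next, I would apply Lemma~\ref{lem:Yann} to~$\alpha_0$ to obtain a unit~$u$ of~$\OK$ such that~$\gamma_{\mathfrak{L}} := u \alpha_0$ satisfies
\[
\h(\gamma_{\mathfrak{L}}) \leq (N\mathfrak{L})^{h_K / d_K} \exp\!\left(C_1(K) R_K\right).
\]
Because~$u$ is a unit,~$\gamma_{\mathfrak{L}}$ is still a generator of~$\mathfrak{L}^{h_K}$, so it is a valid candidate.

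Finally, I would feed~$\gamma_{\mathfrak{L}}$ into Lemma~\ref{lem:hautalpha}, which bounds each archimedean size of the twisted norm by the~$2d_K$-th power of the height:
\[
\left| \tau\!\left( \cN(\gamma_{\mathfrak{L}}) \right) \right|_{\C} \leq \h(\gamma_{\mathfrak{L}})^{2d_K} \leq (N\mathfrak{L})^{2h_K} \exp\!\left( 2 d_K C_1(K) R_K \right).
\]
By Definition~\ref{def:C2(K)}, the last exponential is exactly~$C_2(K)$, giving the desired inequality for every~$\tau \in G$. The main (and only) point requiring care is to verify that~$\cN$ is multiplicative and preserves~$\OK$, so that replacing~$\alpha_0$ by~$u\alpha_0$ is legitimate and~$\cN(\gamma_{\mathfrak{L}})$ still lies in~$\OK$; both facts are immediate from the explicit formula~$\cN(\alpha) = \prod_{\tau \in G} \tau(\alpha)^{2 a_\tau}$ recalled in Notation~\ref{not:po}.
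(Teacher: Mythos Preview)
Your proof is correct and follows exactly the same approach as the paper: apply Lemma~\ref{lem:Yann} to choose a generator of~$\mathfrak{L}^{h_K}$ of controlled height, then conclude via Lemma~\ref{lem:hautalpha} and the definition of~$C_2(K)$. Your final remark about multiplicativity of~$\cN$ is harmless but not actually needed, since Lemma~\ref{lem:hautalpha} is applied directly to~$\gamma_{\mathfrak{L}}\in\OK$ without ever comparing~$\cN(\gamma_{\mathfrak{L}})$ to~$\cN(\alpha_0)$.
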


\begin{proof}
Par le lemme~\ref{lem:Yann}, on peut choisir un g\'en\'erateur de l'id\'eal principal~$\mathfrak{L}^{h_K}$ dont la hauteur est born\'ee par $(N \mathfrak{L})^{h_K / d_K} \exp\left( C_1(K)R_K \right)$~;
on conclut alors par le lemme~\ref{lem:hautalpha}
(voir aussi la proposition~2.3.4 de~\cite{[Dav08]} ou la proposition~2.11 de~\cite{[Cici]}).
\end{proof}

\subsection{Une version effective du th\'eor\`eme de Chebotarev}\label{ssec:Cheboeff}

\begin{TheoCheboEff}
Il existe une constante absolue et effectivement calculable~$A$ ayant la propri\'et\'e suivante~:
soient~$M$ un corps de nombres,~$N$ une extension finie galoisienne de~$M$,~$\Delta_N$ le discriminant de~$N$,~$C$ une classe de conjugaison du groupe de Galois~$\Gal(N/M)$~;
alors il existe un id\'eal premier de~$M$, non ramifi\'e dans~$N$, dont la classe de conjugaison des 
frobenius dans l'extension~$N/M$ est la classe de conjugaison~$C$ et dont la norme dans l'extension~$M/\Q$ est un nombre premier rationnel inf\'erieur ou \'egal \`a~$2(\Delta_N)^A$. 
\end{TheoCheboEff}

\begin{notation}
On note~$\Delta_K$ le discriminant de~$K$.
\end{notation}

\begin{definition}[Ensemble d'id\'eaux~$\J_K$]\label{def:JK}
On note~$\J_K$ l'ensemble des id\'eaux maximaux de~$K$ dont la norme dans l'extension~$K/\Q$ est un nombre premier rationnel totalement d\'ecompos\'e dans~$K$ et inf\'erieur ou \'egal \`a~$2(\Delta_{K})^{Ah_K}$.
\end{definition}

\begin{proposition}\label{prop:genclid}
Toute classe d'id\'eaux de~$K$ contient un id\'eal de~$\J_K$.
\end{proposition}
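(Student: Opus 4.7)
L'id\'ee est d'appliquer le th\'eor\`eme de Chebotarev effectif au corps de classes de Hilbert~$H_K$ de~$K$. Puisque l'extension~$K/\Q$ est galoisienne, toute conjugu\'ee galoisienne de~$H_K$ au-dessus de~$\Q$ est encore une extension ab\'elienne non ramifi\'ee maximale de~$K$, donc co\"incide avec~$H_K$~; ainsi~$H_K/\Q$ est galoisienne. Par la th\'eorie du corps de classes, la r\'eciprocit\'e d'Artin fournit un isomorphisme $\Gal(H_K/K) \simeq \mathrm{Cl}(K)$ qui, \`a tout id\'eal premier~$\q$ de~$K$ non ramifi\'e dans~$H_K$, associe sa classe dans le groupe des classes via le frobenius.

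Fixons une classe~$[I]$ dans~$\mathrm{Cl}(K)$ et soit~$\sigma_{[I]}$ l'\'el\'ement correspondant de~$\Gal(H_K/K) \subset \Gal(H_K/\Q)$. Soit~$C$ la classe de conjugaison de~$\sigma_{[I]}$ dans~$\Gal(H_K/\Q)$~; elle est contenue dans~$\Gal(H_K/K)$ (puisque ce dernier est un sous-groupe distingu\'e, l'extension~$H_K/\Q$ \'etant galoisienne) et correspond, via l'isomorphisme ci-dessus, \`a l'orbite de~$[I]$ sous l'action naturelle de~$G$ sur~$\mathrm{Cl}(K)$. J'applique alors le th\'eor\`eme de Chebotarev effectif \`a l'extension~$H_K/\Q$ et \`a la classe de conjugaison~$C$~: j'obtiens un nombre premier rationnel~$q$, non ramifi\'e dans~$H_K$, dont le frobenius dans~$\Gal(H_K/\Q)$ appartient \`a~$C$, et v\'erifiant~$q \leq 2 (\Delta_{H_K})^{A}$.

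Comme l'image de ce frobenius dans~$\Gal(K/\Q) = G$ est triviale (puisque~$C \subset \Gal(H_K/K)$), le premier~$q$ est totalement d\'ecompos\'e dans~$K$. Soient~$\q_1, \dots, \q_{d_K}$ les id\'eaux premiers de~$K$ au-dessus de~$q$~; les classes~$[\q_1], \dots, [\q_{d_K}]$ dans~$\mathrm{Cl}(K)$ d\'ecrivent l'orbite de~$[I]$ sous~$G$, donc l'un au moins de ces~$\q_i$, not\'e~$\mathfrak{L}$, appartient \`a la classe~$[I]$. Il reste \`a contr\^oler la borne~: comme~$H_K/K$ est non ramifi\'ee, la discriminante relative~$\mathfrak{d}_{H_K/K}$ est triviale, donc la formule multiplicative des discriminants donne $|\Delta_{H_K}| = |\Delta_K|^{[H_K:K]} = |\Delta_K|^{h_K}$. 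On obtient donc $N_{K/\Q}(\mathfrak{L}) = q \leq 2 (\Delta_K)^{A h_K}$, ce qui place bien~$\mathfrak{L}$ dans~$\J_K$.

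Le seul point d\'elicat est le contr\^ole de la classe de conjugaison dans~$\Gal(H_K/\Q)$~: Chebotarev ne prescrit que l'orbite de~$[I]$ sous~$G$, pas~$[I]$ elle-m\^eme, mais ceci suffit puisque les~$d_K$ premiers de~$K$ au-dessus de~$q$ totalement d\'ecompos\'e parcourent pr\'ecis\'ement cette orbite et incluent donc un repr\'esentant de~$[I]$.
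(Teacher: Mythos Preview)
Your argument is correct and is essentially the intended one: the paper itself only cites \cite{[Dav08]}, proposition~3.1.2 and \cite{[Cici]}, proposition~4.2, but the very shape of the bound $2(\Delta_K)^{Ah_K} = 2(\Delta_{H_K})^A$ shows that those references proceed exactly as you do, by applying the effective Chebotarev theorem to the Galois extension $H_K/\Q$ and using $|\Delta_{H_K}| = |\Delta_K|^{h_K}$. Your handling of the one subtle point---that Chebotarev over~$\Q$ only prescribes the $G$-orbit of~$[I]$ in $\mathrm{Cl}(K)$, which is then recovered by running through the $d_K$ primes of~$K$ above the totally split~$q$---is clean and correct.
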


\begin{proof}
Voir~\cite{[Dav08]}, proposition~3.1.2 ou~\cite{[Cici]}, proposition~4.2.
\end{proof}

\section{D\'emonstrations des th\'eor\`emes}

\subsection{Compatibilit\'e des actions au-dessus et hors de~$p$}

\begin{notations}\label{not:poq}
 Soit~$\q$ un id\'eal maximal de~$K$ premier \`a~$p$ en lequel~$E$ a bonne r\'eduction.
\begin{enumerate}
\item On fixe un id\'eal~$\poq$ de~$K\Lq$ au-dessus de~$\po$ (voir notations~\ref{not:Lq} et~\ref{not:po}).
Comme le corps~$\Lq$ est soit~$\Q$, soit un corps quadratique imaginaire, il y a au plus deux choix possibles pour~$\poq$~;
 si~$\Lq$ est inclus dans~$K$, le seul choix possible pour~$\poq$ est~$\poq = \po$.
\item On note~$\Poq$ l'unique id\'eal de~$\Lq$ situ\'e au-dessous de~$\poq$.
\item D'apr\`es la proposition~\ref{prop:frobhorspbon}, il existe une racine du polyn\^ome~$\Pq(X)$ dont la classe modulo~$\Poq$ (qui est dans~$\Fp$) vaut~$\la(\sq)$~; on note~$\bq$ une telle racine.
\end{enumerate}
\end{notations}

\begin{definition}[Borne~$C(K , n)$]\label{def:C(K,n)}
Pour tout entier~$n$, on pose~:
$$
C(K , n) =  \left( n^{2h_K} C_2(K) + n^{h_K} \right)^{2d_K}.
$$
\end{definition}

\begin{proposition}\label{prop:egaglob}
Soient~$\q$ un id\'eal maximal de~$K$ premier \`a~$p$ et~$\gq$ un g\'en\'erateur de~$\q^{h_K}$ v\'erifiant l'in\'egalit\'e de la proposition~\ref{prop:hautgen}.
On suppose que~$p$ est strictement sup\'erieur \`a~$C ( K, N\q )$.
Alors on est dans l'un des trois cas suivants (avec les notations~\ref{not:po} et~\ref{not:poq})~:
 \begin{center}
 \renewcommand\arraystretch{1.5}
\begin{tabular}{| c | c | c | c |} \hline 
Type de r\'eduction de~$E$ en~$\q$ & $\la^2(\sq)$ & $\cN(\gq)$  & Cas\\ \hline 
\multirow{2}{*}{multiplicatif}  & $1 \Mod p$ & $\cN(\gq) = 1$ & M0 \\  \cline{2-4} 
    & $(N\q)^2 \Mod p$  & $\cN(\gq) = (N\q)^{2h_K}$ &  M1\\  \cline{2-4} \hline
  bon  & $\bq^2 \Mod \Poq$ & $\cN(\gq) = \bq^{2h_K}$ & B \\ \hline 
  \end{tabular}
  \end{center} 
\end{proposition}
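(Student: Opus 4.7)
Le plan est de combiner la formule de r\'eciprocit\'e globale de la proposition~\ref{prop:recmuglob}, appliqu\'ee \`a~$\gq$, avec les expressions locales de~$\la^2(\sq)$ donn\'ees dans la partie~\ref{sec:proploc}, puis de convertir la congruence obtenue en \'egalit\'e enti\`ere \`a l'aide de la borne de hauteur de la proposition~\ref{prop:hautgen}.

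Premi\`erement, on appliquerait la proposition~\ref{prop:recmuglob} \`a~$\al = \gq$~: comme~$\gq$ engendre~$\q^{h_K}$ et que~$\q$ est premier \`a~$p$, la seule contribution non triviale dans le produit provient de~$\q$, avec exposant~$h_K$, ce qui donne
$$
\la^2(\sq)^{h_K} \equiv \iota_{\po}\left(\cN(\gq)\right) \pmod{\po}.
$$
On substituerait alors les valeurs locales de~$\la^2(\sq)$ fournies par la proposition~\ref{prop:redmultq} dans le cas de r\'eduction multiplicative et par la proposition~\ref{prop:frobhorspbon} dans le cas de bonne r\'eduction (en relevant dans ce dernier cas la congruence de~$\Poq$ \`a~$\poq$ dans~$\mathcal{O}_{K\Lq}$). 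Ceci fournirait, dans chacun des trois cas annonc\'es, une congruence~$\cN(\gq) \equiv c \pmod{\mathfrak{P}}$ avec~$c \in \{1,\, (N\q)^{2h_K},\, \bq^{2h_K}\}$ et~$\mathfrak{P} = \po$ (cas~M0, M1) ou~$\mathfrak{P} = \poq$ (cas~B).

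Pour passer de la congruence \`a l'\'egalit\'e, on poserait~$\delta = \cN(\gq) - c$ et on raisonnerait par l'absurde en supposant~$\delta \neq 0$. Cet \'el\'ement appartient \`a~$\OK$ dans les cas~M0 et~M1 et \`a~$\mathcal{O}_{K\Lq}$ dans le cas~B~; comme~$\mathfrak{P}$ divise~$\delta$ et est au-dessus de~$p$, on obtient~$|N_{F/\Q}(\delta)| \geq p$ avec~$F = K$ ou~$F = K\Lq$ suivant le cas. D'autre part, pour tout plongement archim\'edien~$\sigma$ de~$F$, la proposition~\ref{prop:hautgen} donne~$|\sigma(\cN(\gq))|_{\C} \leq (N\q)^{2h_K} C_2(K)$, et la proposition~\ref{prop:frobhorspbonST} fournit~$|\sigma(\bq^{2h_K})|_{\C} = (N\q)^{h_K}$~; en utilisant~$[K\Lq : \Q] \leq 2 d_K$ et~$C_2(K) \geq 1$, on en d\'eduirait dans les trois cas la majoration~$|N_{F/\Q}(\delta)| \leq C(K, N\q)$, contredisant l'hypoth\`ese~$p > C(K, N\q)$ et for\c{c}ant~$\delta = 0$.

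La principale difficult\'e tient au cas~B de bonne r\'eduction, o\`u~$\bq$ n'est a priori pas dans~$K$ et o\`u il faut par cons\'equent travailler dans le compositum~$K\Lq$ (de degr\'e au plus~$2 d_K$ sur~$\Q$)~; c'est cette extension qui explique l'exposant~$2 d_K$ dans la d\'efinition~\ref{def:C(K,n)} de~$C(K, n)$. Un point technique subsidiaire sera de v\'erifier soigneusement la majoration uniforme dans le cas~M1, o\`u~$|c|_{\C} = (N\q)^{2h_K}$ peut \^etre sensiblement plus grand que~$(N\q)^{h_K}$~; l'in\'egalit\'e requise d\'ecoule alors de~$C_2(K) \geq 1$ et du fait que l'on raisonne dans~$\OK$ (exposant~$d_K$) plut\^ot que dans~$\mathcal{O}_{K\Lq}$.
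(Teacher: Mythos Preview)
Your plan is correct and follows essentially the same approach as the paper: apply the global reciprocity formula (proposition~\ref{prop:recmuglob}) to~$\gq$, combine with the local values of~$\la^2(\sq)$ to get a congruence for~$\cN(\gq)-c$ modulo a prime above~$p$, and then use the height bound of proposition~\ref{prop:hautgen} together with~$p>C(K,N\q)$ to force the corresponding norm to vanish. The paper's proof is simply a terser version of exactly this argument; your additional remarks on the case~B (working in~$K\Lq$, which accounts for the exponent~$2d_K$) and on the harmless size discrepancy in case~M1 are accurate and make explicit what the paper leaves to the reader.
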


\begin{proof}
La d\'emonstration est semblable \`a celle  de la proposition~2.4.1 de~\cite{[Dav08]} (ou de la proposition~2.14 de~\cite{[Cici]}). On en rappelle ici le principe.

D'apr\`es les propositions~\ref{prop:redmultq} et~\ref{prop:frobhorspbon} et en appliquant la proposition~\ref{prop:recmuglob} \`a l'\'el\'ement~$\gq$ de~$\OK$,
 on obtient qu'on est dans l'un des trois cas suivants~:
  \begin{center}
 \renewcommand\arraystretch{1.5}
 \begin{tabular}{| c | c | c | c |} \hline 
Type de r\'eduction de~$E$ en~$\q$ & $\la^2(\sq)$ & Congruence  & Cas\\ \hline 
\multirow{2}{*}{multiplicatif}  & $1 \Mod p$ & $\cN(\gq) \equiv 1 \Mod \po $ & M0 \\  \cline{2-4} 
    & $(N\q)^2 \Mod p$  & $\cN(\gq) \equiv (N\q)^{2h_K}  \Mod \po$ &  M1\\  \cline{2-4} \hline
  bon  & $\bq^2 \Mod \Poq$ & $\cN(\gq) \equiv \bq^{2h_K} \Mod \poq$  & B \\ \hline 
  \end{tabular}
  \end{center}
Ainsi, selon chaque cas,~$p$ divise l'un des trois entiers relatifs~$N_{K / \Q} \left( \cN(\gq) - 1 \right) $, $N_{K / \Q} \left( \cN(\gq) - (N\q)^{2h_K} \right) $ ou~$N_{K\Lq / \Q} \left( \cN(\gq) - \bq^{2h_K} \right) $.
Comme~$p$ est choisi strictement sup\'erieur \`a~$C ( K, N\q )$, qui par la proposition~\ref{prop:hautgen} est sup\'erieur ou \'egal aux valeurs absolues de ces trois entiers, on obtient que dans chaque cas, l'entier relatif en question est nul, ce qui donne la conclusion de la proposition. 
\end{proof}  

\begin{proposition}\label{prop:compat}
Soient~$q$ un nombre premier rationnel totalement d\'ecompos\'e dans~$K$ et~$\q$ un id\'eal premier de~$K$ au-dessus de~$q$.
On suppose que~$p$ est strictement plus grand que $C(K, q)$.
Alors on est dans l'un des quatre cas suivants (avec les notations~\ref{not:poq})~:
  \begin{center}
 \renewcommand\arraystretch{1.5}
 \noindent
 \begin{small} 
 \begin{tabular}{| c | c | c | c |} \hline
 R\'eduction de~$E$ en~$\q$ & $\la^2(\sq)$ & Famille $(\ato)_{\tau \in G}$ & Cas \\ \hline
\multirow{2}{*}{multiplicative} & $1 \mod p$  & $\forall \tau \in G, \ato = 0$ & M0 \\ \cline{2-4} 
  & $q^2 \mod p$  & $\forall \tau \in G, \ato = 1$ & M1 \\ \hline
bonne (ordinaire),  &  \multirow{2}{*}{$ N_{K/\Lq}(\q) = \bq\OLq$} & $\forall \tau \in \Gal(K/\Lq), \ato = 1$ & \multirow{2}{*}{BO} \\
corps~$\Lq$ quadratique, &  & et $\ato = 0$ sinon & \\ \cline{2-4}
inclus dans~$K$, &  \multirow{2}{*}{$ N_{K/\Lq}(\q) = \bqb\OLq$} & $\forall \tau \in \Gal(K/\Lq), \ato = 0$ & \multirow{2}{*}{BO'} \\
$p$ d\'ecompos\'e dans~$\Lq$ &  & et $\ato = 1$ sinon  & \\  \hline
   \end{tabular} 
  \end{small}
  \end{center}
 \end{proposition}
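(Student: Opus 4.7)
Le plan est d'appliquer la proposition~\ref{prop:egaglob} \`a~$\q$, en notant que $N\q = q$ puisque $q$ est totalement d\'ecompos\'e dans~$K$~: l'hypoth\`ese $p > C(K,q)$ nous place alors dans l'un des trois cas M0, M1, B, et l'essentiel du travail consistera \`a traduire les \'egalit\'es obtenues sur $\cN(\gq)$ en les familles~$(\ato)_{\tau \in G}$ annonc\'ees, via une comparaison d'id\'eaux.

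L'observation cl\'e est que $q$ \'etant totalement d\'ecompos\'e dans~$K$, on a $q\OK = \prod_{\tau \in G} \tau(\q)$ avec des facteurs deux \`a deux distincts, de sorte que l'\'el\'ement $\cN(\gq) = \prod_{\tau \in G} \tau(\gq)^{2\ato}$ engendre l'id\'eal $\prod_{\tau \in G} \tau(\q)^{2\ato h_K}$~; dans chaque cas de la proposition~\ref{prop:egaglob}, on le comparera \`a la factorisation explicite de l'id\'eal engendr\'e par le second membre. Le cas~M0, o\`u $\cN(\gq) = 1$, donne imm\'ediatement $\ato = 0$ pour tout~$\tau$~; le cas~M1, o\`u $\cN(\gq) = q^{2h_K}$, conduit \`a l'id\'eal $\prod_\tau \tau(\q)^{2h_K}$ et donc \`a $\ato = 1$ pour tout~$\tau$.

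Le cas~B, o\`u $\cN(\gq) = \bq^{2h_K}$, est le point technique principal. On y montrera d'abord que $\Lq \subset K$~: sinon $K \cap \Lq = \Q$, donc $\bq^{2h_K} \in \Q$, et $|\bq|^2 = q$ impose $\bq^{2h_K} = \pm q^{h_K}$~; la comparaison d'id\'eaux donne alors $2\ato h_K = h_K$, impossible car $\ato$ est entier. Puisque $\Lq \subset K$ et que $q$ est non ramifi\'e dans~$K$, il l'est dans~$\Lq$~; et $q$ ne peut \^etre inerte dans~$\Lq$, car la norme de $\bq\OLq$ vaut~$q$. Ainsi $q$ se d\'ecompose dans $\Lq$ sous la forme $q\OLq = \mathfrak{Q}\overline{\mathfrak{Q}}$ avec $\{\bq\OLq,\bqb\OLq\} = \{\mathfrak{Q},\overline{\mathfrak{Q}}\}$.

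En posant $H = \Gal(K/\Lq)$ et en utilisant que $q$ est totalement d\'ecompos\'e dans~$K$, les id\'eaux premiers de~$K$ au-dessus de~$\mathfrak{Q}$ sont exactement les $\tau(\q)$ pour $\tau \in H$, et de m\^eme pour~$\overline{\mathfrak{Q}}$. La comparaison des id\'eaux engendr\'es par $\cN(\gq)$ et $\bq^{2h_K}$, dans les sous-cas $N_{K/\Lq}(\q) = \bq\OLq$ et $N_{K/\Lq}(\q) = \bqb\OLq$, produit alors les familles~$(\ato)$ des cas~BO et~BO'. Enfin, que $p$ se d\'ecompose dans~$\Lq$ d\'ecoulera de la proposition~\ref{prop:frobhorspbon}~: les r\'eductions modulo~$\Poq$ de~$\bq$ et~$\bqb$ \'etant dans~$\Fpx$, le polyn\^ome $\Pq(X) \bmod p$ se scinde sur~$\Fp$~; puisque $p$ est non ramifi\'e dans $K \supset \Lq$ et que $p > 4q$, le discriminant $\Tq^2 - 4q$ est un carr\'e non nul modulo~$p$, d'o\`u $p$ se d\'ecompose dans $\Lq = \Q(\sqrt{\Tq^2 - 4q})$. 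L'\'etape la plus d\'elicate sera l'exclusion, par l'argument d'id\'eaux ci-dessus, du cas $\Lq \not\subset K$.
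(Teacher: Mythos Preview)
Your proposal follows essentially the same route as the paper: apply Proposition~\ref{prop:egaglob}, compute the ideal~$\cN(\gq)\OK = \prod_{\tau}\tau(\q)^{2\ato h_K}$ using that the~$\tau(\q)$ are pairwise distinct, and compare factorisations. Your argument that~$\Lq \subset K$ (via~$\bq^{2h_K} = \cN(\gq) \in K \cap \Lq$, forcing~$\bq^{2h_K} \in \Q$ otherwise and hence~$2\ato h_K = h_K$, absurd) is the paper's argument; your extraction of the families~$(\ato)$ by identifying the primes of~$K$ above~$\bq\OLq$ with~$\{\tau(\q) : \tau \in \Gal(K/\Lq)\}$ is if anything slightly more direct than the paper's, which first shows the~$(\ato)$ are constant on cosets modulo~$\Gal(K/\Lq)$ and then compares ideals in~$\OLq$.

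There is, however, one genuine omission. The statement asserts that in cases~BO and~BO' the reduction of~$E$ at~$\q$ is \emph{ordinary}, and your sketch does not address this. The paper supplies the missing step: if the reduction were supersingular then~$q \mid \Tq$; combined with~$|\Tq| \le 2\sqrt{q}$ this forces~$\Tq = 0$ (or~$\Tq = \pm q$ when~$q \in \{2,3\}$), and in each of these cases one checks that~$q$ ramifies in~$\Lq = \Q(\sqrt{\Tq^2 - 4q})$. This contradicts the fact, which you \emph{have} established, that~$q$ splits in~$\Lq$ (since~$q$ is totally split in~$K \supset \Lq$). So you have the key ingredient in hand; you simply need to add this short final argument.
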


 \begin{proof}  
 La démonstration est semblable à celle de la proposition~2.15 de~\cite{[Cici]} (voir aussi la proposition~2.4.2 de~\cite{[Dav08]} et le lemme~2 de~\cite{[Mom]}).
  On en rappelle ici les principaux arguments, car la proposition~\ref{prop:compat} est centrale pour la d\'emonstration du th\'eor\`eme~II.
 
 Comme le nombre premier~$q$ est suppos\'e totalement d\'ecompos\'e dans l'extension galoisienne~$K/\Q$, les id\'eaux~$\tau(\q)$ sont deux \`a deux distincts lorsque~$\tau$ d\'ecrit~$G$, leur produit est l'id\'eal~$q\OK$ 
et la norme de~$\q$ dans~$K / \Q$ est \'egale \`a~$q$.
En particulier, en supposant que~$p$ est strictement sup\'erieur \`a~$C(K, q)$, on se place dans les hypoth\`eses de la proposition~\ref{prop:egaglob}.
On raisonne donc selon les trois cas de cette proposition.
 
 L'idéal engendré par~$\cN(\gq)$ dans~$\OK$ est~:
 $$
  \cN(\gq) \OK =  \left(\prod\limits_{\tau \in G} \tau(\gq)^{2\ato}\right) \OK =  \prod\limits_{\tau \in G} \left( \tau \left( \q^{h_K} \right) \right)^{2\ato}= \left( \prod\limits_{\tau \in G} \tau \left(  \q \right)^{\ato} \right)^{2h_K}
 $$

Dans les cas~M0 et~M1, la comparaison de l'id\'eal engendré dans~$\OK$ par~$\cN(\gq)$ avec celui engendré par~$1$ ou~$q^{2h_K}$ donne le résultat de la proposition.

Dans le cas~B, l'élément~$\cN(\gq)$ de~$K$ est contenu dans~$\Lq$~; ainsi, soit il est rationnel, soit il engendre~$\Lq$ (qui est alors contenu dans~$K$).

On suppose d'abord que~$\cN(\gq)$, qui est égal à~$\bq^{2h_K}$, est rationnel.
Alors~$\bq^{2h_K}$ est à la fois un entier algébrique (de~$\Lq$) et rationnel~; il appartient donc à~$\Z$.
Comme sa valeur absolue complexe est~$q^{h_K}$, on a~$\bq^{2h_K}$  égal à~$q^{h_K}$ ou~$-q^{h_K}$.
Dans les deux cas,~$\bq^{2h_K}$, qui est égal à~$\cN(\gq)$, engendre dans~$\OK$ l'idéal~:
$$
\left( \prod\limits_{\tau \in G} \tau \left(  \q \right)^{\ato} \right)^{2h_K} = \cN(\gq) \OK = \bq^{2h_K} \OK = q^{h_K} \OK = \left( \prod\limits_{\tau \in G} \tau \left(  \q \right) \right)^{h_K}.
$$ 
Ceci est impossible (avec les idéaux~$(\tau(\q))_{\tau \in G}$ deux à deux distincts et les entiers~$(\ato)_{\tau \in G}$ valant~$0$ ou~$1$).

Ainsi, l'élément~$\cN(\gq)$ de~$K$ engendre~$\Lq$, qui est quadratique imaginaire et contenu dans~$K$.
La fin de la démonstration de la proposition~2.15 de~\cite{[Cici]} (à partir de la page~20~; voir aussi la proposition~2.4.2 de~\cite{[Dav08]}) s'applique alors directement,
en prenant garde que les entiers~$(\ato)_{\tau \in G}$ de~\textit{loc. cit.} valent~$12$ fois ceux du présent texte.

On résume ici les étapes de cette démonstration.
Le fait que~$\cN(\gq)$ appartienne à~$\Lq$ donne que les entiers~$(\ato)_{\tau \in G}$ sont constants sur les classes modulo~$\Gal(K/\Lq)$, sous-groupe d'indice~$2$ de~$\Gal(K/\Q)$.
Alors~$\cN(\gq)$ engendre dans~$\OLq$ l'idéal (avec $\gamma$ un élément de~$G$ induisant sur~$\Lq$ la conjugaison complexe)~:
$$
 \cN(\gq) \OLq = \left(\bq \OLq \right)^{2h_K} =  \left(\left(N_{K/\Lq}\left(\q\right)\right)^{a_{id}} \left(\overline{N_{K/\Lq}\left(\q\right)}\right)^{a_\gamma}\right)^{2h_K}.
$$
Comme on a suppos\'e~$q$ totalement  d\'ecompos\'e dans l'extension~$K/\Q$,~$q$ est \'egalement totalement d\'ecompos\'e dans les extensions~$\Lq/\Q$ et~$K/\Lq$.
   La norme~$N_{K/\Lq}(\q)$ est donc un id\'eal premier de~$\Lq$ au-dessus de~$q$.
   Or l'\'egalit\'e~$q = \bq\bqb$ indique que les deux id\'eaux premiers (distincts) de~$\Lq$ au-dessus de~$q$ sont~$\bq\OLq$ et~$\bqb\OLq$. 
  Ainsi~$N_{K/\Lq}(\q)$  est \'egal soit \`a~$\bq\OLq$ soit \`a~$\bqb\OLq$. L'égalité  entre idéaux ci-dessus donne alors les cas BO et BO' de la proposition.
  
  Comme le polynôme~$X^2 - \Tq X + q$ (voir les notations~\ref{not:Lq}) est scindé modulo~$p$ (voir la proposition~\ref{prop:frobhorspbon}),
  l'entier~$\Tq^2 - 4q$ est un carré modulo~$p$.
  Ainsi, soit~$p$ est d\'ecompos\'e dans~$\Lq$, soit~$p$ divise l'entier relatif non nul~$\Tq^2 - 4 q$.
 Comme la valeur absolue de~$\Tq^2 - 4 q$ est~$4 q - \Tq^2 $ et inf\'erieure ou \'egale \`a~$4q$,
  le choix  de~$p$ strictement sup\'erieur \`a~$C(K , q)$ (cette borne étant strictement sup\'erieure \`a~$4q$), implique que~$p$ est d\'ecompos\'e dans le corps quadratique~$\Lq$.
  
  Enfin, on suppose par l'absurde que~$E$ a r\'eduction supersinguli\`ere en~$\q$.
  Ceci équivaut à ce que~$q$ divise~$\Tq$, entier relatif de valeur absolue inférieure ou égale à~$2\sqrt{q}$.
  Alors  on est dans l'un des cas suivants~:~$\Tq$ est nul~;~$q$ est \'egal \`a~$2$ et~$\Tq$ est \'egal \`a~$0$,~$2$ ou~$-2$~;~$q$ est \'egal \`a~$3$ et~$\Tq$ est \'egal \`a~$0$,~$3$ ou~$-3$. 
  Si~$\Tq$ est nul ou~$q$ est \'egal \`a~$3$ et~$\Tq$ vaut~$\pm 3$, alors~$\Lq$ est \'egal \`a~$\Q(\sqrt{-q})$, dans lequel~$q$ est ramifi\'e~;
  si~$q$ est \'egal \`a~$2$ et~$\Tq$ vaut~$\pm 2$, alors~$\Lq$ est \'egal \`a~$\Q( i )$, dans lequel~$2$ est ramifi\'e.
  Or, on a montr\'e que~$q$ est d\'ecompos\'e dans le corps~$\Lq$, qui est inclus dans~$K$. On en d\'eduit que~$E$ a r\'eduction ordinaire en~$\q$.
 \end{proof}

\begin{definition}[Borne $C_K$]\label{def:CK}
On pose
$$
C_K  = \max \left(  C\left( K , 2(\DK)^{Ah_K}  \right) ,   \left(  1 + 3^{\frac{d_K h_K}{2}}  \right)^2   \right).
$$
\end{definition}

Le nombre~$C_K$ ne d\'epend que du corps de nombres~$K$.

Les bornes uniformes sur l'ordre des points de torsion d'une courbe elliptique figurant dans~\cite{[Mer96]} ou~\cite{[Pa]} permettent d'\'eliminer les cas~M0 et~M1 de la proposition~\ref{prop:compat}
(voir aussi l'appendice~B de~\cite{[Krau]}).
\begin{proposition}\label{prop:mmredJK}
On suppose que~$p$ est strictement sup\'erieur \`a~$C_K$.
Alors, en tout id\'eal de~$\J_K$, la courbe~$E$ a bonne r\'eduction et tous les id\'eaux de~$\J_K$ appartiennent au m\^eme cas (BO ou BO') de la proposition~\ref{prop:compat}.
\end{proposition}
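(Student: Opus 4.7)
Le plan est d'appliquer la proposition~\ref{prop:compat} en chaque id\'eal~$\q$ de~$\J_K$, puis d'\'eliminer les cas multiplicatifs M0 et M1 gr\^ace aux bornes uniformes de Merel-Oesterl\'e-Parent sur l'ordre des points de torsion. Chaque~$\q \in \J_K$ ayant pour norme un nombre premier rationnel totalement d\'ecompos\'e inf\'erieur \`a~$2(\DK)^{Ah_K}$, la croissance de~$C(K, \cdot)$ et l'hypoth\`ese $p > C_K$ entra\^inent $p > C(K, N\q)$~: la proposition~\ref{prop:compat} s'applique donc en chaque tel~$\q$, qui rel\`eve ainsi d'un des quatre cas M0, M1, BO, BO'.

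Pour \'eliminer le cas~M0, je supposerais l'existence d'un~$\q \in \J_K$ y tombant~: alors $\ato = 0$ pour tout $\tau \in G$, et la notation~\ref{not:ap} montre que $\la$ est trivial sur les sous-groupes d'inertie en toute place de~$K$ au-dessus de~$p$. Combin\'e avec la proposition~\ref{prop:inerhorsp}, ceci entra\^ine que $\la$ est non ramifi\'e en toute place finie de~$K$, et se factorise donc \`a travers $\Gal(H_K/K)$, o\`u~$H_K$ d\'esigne le corps de classes de Hilbert de~$K$. L'ordre de~$\la$ divise alors~$h_K$~; le corps fixe~$K'$ de~$\ker \la$ v\'erifie $[K' : \Q] \leq d_K h_K$, et $\la$ restreint \`a~$G_{K'}$ \'etant trivial, le g\'en\'erateur de~$W$ fournit un point de~$E$ de~$p$-torsion rationnel sur~$K'$. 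La borne de Merel-Oesterl\'e-Parent (voir les introductions de~\cite{[Mer96]} et~\cite{[Pa]}) donne alors $p \leq \left(1 + 3^{d_K h_K / 2}\right)^2 \leq C_K$, ce qui contredit l'hypoth\`ese $p > C_K$. Le cas~M1 est sym\'etrique~: l'hypoth\`ese $\ato = 1$ pour tout~$\tau$ fait de~$\kip \la^{-1}$ un caract\`ere non ramifi\'e partout, et l'argument appliqu\'e \`a la courbe isog\`ene~$E/W$ (dont un sous-groupe d'ordre~$p$ porte l'action galoisienne par~$\kip \la^{-1}$, gr\^ace \`a la dualit\'e de Weil) produit un point de~$p$-torsion rationnel sur une extension de~$\Q$ de degr\'e au plus~$d_K h_K$, contredisant \`a nouveau la m\^eme borne.

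Il reste \`a voir que tous les id\'eaux de~$\J_K$ rel\`event du m\^eme cas entre BO et BO'. Le tableau de la proposition~\ref{prop:compat} caract\'erise le cas~BO par $a_{\mathrm{id}} = 1$ (l'identit\'e appartenant toujours \`a $\Gal(K/\Lq)$) et le cas~BO' par $a_{\mathrm{id}} = 0$~; comme la famille $(\ato)_{\tau \in G}$ ne d\'epend que des donn\'ees fix\'ees $(E, p, \po)$ et non du choix de~$\q$, tous les id\'eaux de~$\J_K$ rel\`event du m\^eme cas, ce qui ach\`eve la d\'emonstration. L'obstacle principal sera la justification propre du cas~M1 via la dualit\'e de Weil, qui permet d'identifier l'action de~$G_K$ sur le sous-groupe d'ordre~$p$ pertinent de~$E/W$ avec le caract\`ere~$\kip \la^{-1}$~; le reste rel\`eve essentiellement de manipulations d\'ej\`a d\'evelopp\'ees dans les parties pr\'ec\'edentes.
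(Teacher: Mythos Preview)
Your proof is correct and follows the same approach as the paper: apply proposition~\ref{prop:compat} to each~$\q \in \J_K$, eliminate M0 and M1 via the Merel--Oesterl\'e bound (passing to~$E/W$ for M1), then use that the family~$(a_\tau)_\tau$ is independent of~$\q$ to conclude that all ideals lie in the same case. Your criterion $a_{\mathrm{id}} = 1$ versus $a_{\mathrm{id}} = 0$ to separate BO from BO' is a slight simplification of the paper's (which distinguishes them by whether $\{\tau : a_\tau = 1\}$ is a subgroup or the complement of one), and the concern you flag about the M1 case is not a genuine obstacle---the paper dispatches it in one line by observing that~$E[p]/W$ is a~$K$-rational subgroup of order~$p$ of~$E/W$ with isogeny character~$\chi_p\lambda^{-1}$.
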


\begin{proof}
Par construction, la borne~$C_K$ est sup\'erieure ou \'egale \`a la borne~$C(K,q)$ pour tout nombre premier rationnel~$q$ totalement d\'ecompos\'e dans~$K$ et inf\'erieur ou \'egal \`a~$2(\DK)^{Ah_K}$.
Ainsi, pour tout id\'eal~$\q$ dans~$\J_K$, on est dans les conditions d'application de la proposition~\ref{prop:compat}.

Chaque cas de la proposition~\ref{prop:compat} (M0, M1, BO et BO') d\'etermine une famille diff\'erente de coefficients~$(\ato)_{\tau \in G}$ 
(on distingue les cas BO et BO'  par le fait que dans le cas BO, l'ensemble des \'el\'ements~$\tau$ de~$G$ pour lesquels~$\ato$ est \'egal \`a~$1$ est un sous-groupe de~$G$ alors que dans le cas BO', c'est le compl\'ementaire d'un sous-groupe).

Pour \'eliminer les cas M0 et M1, on raisonne comme dans le lemme~3.8 de~\cite{[Hudig]} (voir aussi la partie~3.2.1 de~\cite{[Dav08]} ou~\cite{[Cici]}, proposition~3.4).

Dans le cas M0, pour tout id\'eal premier~$\p$ de~$K$ au-dessus de~$p$, le coefficient~$\ap$ est nul.
Par d\'efinition de~$\ap$ (notation~\ref{not:ap}), cela implique que caract\`ere~$\la$ est  non ramifi\'e en toute place de~$K$ au-dessus de~$p$. 
Ainsi, l'extension~$\Kl$ de~$K$ trivialisant~$\la$, qui est ab\'elienne, est non ramifi\'ee en toute place finie de~$K$ (voir proposition~\ref{prop:inerhorsp})~;
 le corps~$\Kl$ est donc inclus dans le corps de classes de Hilbert de~$K$ et son degr\'e (sur~$\Q$) est inf\'erieur ou \'egal \`a~$d_K h_K$.
Or, la courbe~$E$ poss\`ede un point d'ordre~$p$ d\'efini sur~$\Kl$.
D'apr\`es des travaux de Merel et Oesterl\'e mentionn\'es dans les introductions de~\cite{[Pa]} et~\cite{[Mer96]}, on a alors
$$
p \leq \left(   1 + 3^{ \frac{d_K h_K}{2} }   \right)^2,
$$
ce qui contredit le choix de~$p$ strictement sup\'erieur \`a~$C_K$.

Dans le cas M1, pour tout id\'eal premier~$\p$ de~$K$ au-dessus de~$p$,~$\ap$ est \'egal \`a~$1$.
Ainsi, le caract\`ere~$\kip\la^{-1}$ est non ramifi\'e en toute place de~$K$ au-dessus de~$p$, et par suite en toute place finie de~$K$.
On consid\`ere alors le quotient de~$E$ par le sous-groupe d'isog\'enie~$W$.
On obtient une courbe elliptique~$E'$ d\'efinie sur~$K$, isog\`ene \`a~$E$~sur~$K$~; le sous-groupe~$E[p]/W$ de~$E'$ est d\'efini sur~$K$ et d'ordre~$p$ et le caract\`ere d'isog\'enie associ\'e \`a ce sous-groupe est~$\kip\la^{-1}$.
Le caract\`ere~$\kip\la^{-1}$ \'etant non ramifi\'e en toute place finie de~$K$, 
on applique le m\^eme raisonnement que pr\'ec\'edemment.
\end{proof}

\subsection{Vers un Gr\"ossencharakter}\label{ssec:sysco}

\begin{proposition}\label{prop:ordinaire}
On suppose que~$p$ est strictement sup\'erieur \`a~$C_K$.
\begin{enumerate}
\item Il existe un unique corps quadratique imaginaire~$L$ contenu dans~$K$ tel que pour tout~$\q$ dans~$\J_K$,~$\Lq$ est \'egal \`a~$L$.
\item Le corps de classes de Hilbert de~$L$ est contenu dans~$K$.  
\item Le nombre premier~$p$ est d\'ecompos\'e dans~$L$.
\item Il existe un unique id\'eal premier~$\pL$ de~$L$ au-dessus de~$p$ tel que~$\la$ est non ramifi\'e hors de~$\pL$.
\item Soit~$\p$ un id\'eal premier de~$K$ au-dessus de~$p$~; alors~$\ap$ est \'egal \`a~$1$ si et seulement si~$\p$ est au-dessus de~$\pL$~; sinon,~$\ap$ est \'egal \`a~$0$.
\item Soit~$\q$ dans~$\J_K$~; il existe une racine~$\bq'$ de~$\Pq(X)$ v\'erifiant~$N_{K/L}(\q) = \bq' \OL$ et $\la(\sq) = \bq' \mod \pL $.  
\end{enumerate}
\end{proposition}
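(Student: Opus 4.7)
Le plan est de combiner la proposition~\ref{prop:mmredJK} et la proposition~\ref{prop:compat} avec quelques arguments de th\'eorie du corps de classes (essentiellement la proposition~\ref{prop:genclid} et la r\'eciprocit\'e d'Artin).

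D'abord, les assertions~(1), (3), (4) et (5) proviennent directement d'un examen des cas BO/BO' de la proposition~\ref{prop:compat}. D'apr\`es la proposition~\ref{prop:mmredJK}, tout~$\q$ dans~$\J_K$ rel\`eve du cas BO ou du cas BO', et d'un m\^eme cas pour tous. Or la famille~$(\ato)_{\tau \in G}$ est un invariant global de~$(E,p,\po)$, ind\'ependant de~$\q$~; la proposition~\ref{prop:compat} affirme qu'elle d\'etermine enti\`erement le sous-groupe~$\Gal(K/\Lq)$ (\'egal \`a~$\{\tau \in G : \ato = 1\}$ en cas BO, \`a son compl\'ementaire en cas BO'). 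Donc~$\Lq$ est le m\^eme corps~$L$ pour tout~$\q \in \J_K$, ce qui donne~(1), et~$p$ y est d\'ecompos\'e, d'o\`u~(3). Pour~(5), on observe que pour~$\p = \tau^{-1}(\po)$, la trace de~$\p$ sur~$L$ vaut~$\po \cap \OL$ si~$\tau|_L = \mathrm{id}$ et sa conjugu\'ee complexe sinon~; comme~$p$ est d\'ecompos\'e dans~$L$, il poss\`ede exactement deux id\'eaux premiers au-dessus de lui dans~$L$, conjugu\'es l'un de l'autre, et en posant~$\pL$ \'egal \`a~$\po \cap \OL$ ou \`a son conjugu\'e (selon le cas BO ou BO'), on obtient que~$\ap = 1$ si et seulement si~$\p$ est au-dessus de~$\pL$. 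L'assertion~(4) en r\'esulte en combinant la proposition~\ref{prop:inerhorsp} (non-ramification hors de~$p$) et la d\'efinition de~$\ap$ (notation~\ref{not:ap}), qui implique que~$\la$ est non ramifi\'e en tout id\'eal premier de~$K$ au-dessus de~$p$ qui n'est pas au-dessus de~$\pL$~; l'unicit\'e de~$\pL$ est assur\'ee par l'existence simultan\'ee de places avec~$\ap = 0$ et~$\ap = 1$ (les cas M0 et M1 ayant \'et\'e \'elimin\'es par la proposition~\ref{prop:mmredJK}).

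L'\'etape centrale est l'assertion~(2). Pour chaque~$\q \in \J_K$, la proposition~\ref{prop:compat} donne que~$N_{K/L}(\q)$ vaut~$\bq\OL$ ou~$\bqb\OL$~: c'est donc un id\'eal premier principal de~$L$. On consid\`ere alors le compositum~$KH_L$, o\`u~$H_L$ est le corps de classes de Hilbert de~$L$~: c'est une extension ab\'elienne non ramifi\'ee de~$K$, et~$\Gal(KH_L/K)$ s'identifie via la r\'eciprocit\'e d'Artin \`a un quotient du groupe des classes~$\mathrm{Cl}(K)$, le frobenius d'un id\'eal premier~$\q$ de~$K$ correspondant \`a sa classe. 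Pour~$\q \in \J_K$, la principalit\'e de~$N_{K/L}(\q)$ entra\^ine sa d\'ecomposition totale dans~$H_L/L$~; combin\'ee au fait que~$q = N_{K/\Q}(\q)$ est totalement d\'ecompos\'e dans~$K$ (donc dans~$L$), on obtient que~$\q$ se d\'ecompose totalement dans~$KH_L/K$ et son frobenius y est trivial. La proposition~\ref{prop:genclid} assure que les classes~$\{[\q] : \q \in \J_K\}$ recouvrent~$\mathrm{Cl}(K)$~; par cons\'equent~$\Gal(KH_L/K) = \{1\}$ et~$H_L \subset K$.

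Enfin, l'assertion~(6) est une reformulation directe des cas BO/BO' de la proposition~\ref{prop:compat}~: l'id\'eal~$\Poq$ des notations~\ref{not:poq} s'identifie \`a~$\pL$ apr\`es choix appropri\'e de~$\poq$ au-dessus de~$\pL$, et la racine~$\bq'$ est alors~$\bq$ en cas BO et~$\bqb$ en cas BO'. L'\'etape dont je pr\'evois qu'elle soit le principal obstacle est l'assertion~(2)~: m\^eme si les ingr\'edients (engendrement de~$\mathrm{Cl}(K)$ par~$\J_K$, calcul de normes, structure non ramifi\'ee ab\'elienne de~$KH_L/K$) sont bien en place, il faut articuler soigneusement la principalit\'e des normes~$N_{K/L}(\q)$ et la trivialit\'e du frobenius correspondant dans~$\Gal(KH_L/K)$ via la r\'eciprocit\'e d'Artin.
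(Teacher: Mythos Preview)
Your proposal is correct and follows essentially the same path as the paper, which defers points~(1)--(5) to the corresponding results in~\cite{[Cici]} and~\cite{[Dav08]} and then treats point~(6) exactly as you do (distinguishing the cases~BO and~BO' and taking~$\bq'=\bq$ or~$\bq'=\bqb$ accordingly). Your argument for~(2) via the norm functoriality of the Artin map and the covering of~$\mathrm{Cl}(K)$ by~$\J_K$ is precisely the mechanism underlying those references.

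One small correction in your treatment of~(6): since~$L=\Lq\subset K$ by point~(1), the compositum~$K\Lq$ equals~$K$, so there is \emph{no} freedom in choosing~$\poq$~: one necessarily has~$\poq=\po$ and hence~$\Poq=\po\cap\OL$. The identification with~$\pL$ is then forced by the case: $\pL=\Poq$ in case~BO and~$\pL=\overline{\Poq}$ in case~BO' (this is what the paper states, citing~\cite{[Dav08]}, remarque~3.2.10). In case~BO' one must then observe that~$\la(\sq)\in\Fp$ implies $\bq\Mod\Poq=\bqb\Mod\overline{\Poq}$, so that~$\bq'=\bqb$ indeed satisfies~$\la(\sq)=\bq'\Mod\pL$. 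Your conclusion is right; only the phrase ``choix appropri\'e de~$\poq$'' is misleading.
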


\begin{proof}
On est dans les conditions de la partie~4.2.2 de~\cite{[Cici]} (voir aussi la partie~3.2.3 de~\cite{[Dav08]} et le théorème~1 de~\cite{[Mom]}).
Les cinq premiers points de la proposition d\'ecoulent ainsi des propositions~4.6,~4.7 et~4.8 de~\cite{[Cici]}
(ou de mani\`ere \'equivalente de la proposition~3.2.5 et des lemmes~3.2.8 et~3.2.9 de~\cite{[Dav08]}).
Pour le point 5, on notera 
que l'entier~$\ap$ de~\cite{[Cici]} et~\cite{[Dav08]} vaut~$12$ fois l'entier~$\ap$ du pr\'esent texte.

On d\'emontre enfin le point~6.
En suivant  les notations~\ref{not:poq}, on a~$ \la(\sq) = \bq \mod \Poq$.
D'apr\`es la remarque~3.2.10 de~\cite{[Dav08]} (voir aussi les d\'emonstrations des propositions~4.8 et~4.10 de~\cite{[Cici]}),
 l'id\'eal~$\pL$ du point 4 est \'egal \`a~$\Poq$ si tous les id\'eaux de~$\J_K$ sont de type~BO dans la proposition~\ref{prop:compat}
 et il est \'egal au conjugu\'e complexe de~$\Poq$ si tous les id\'eaux de~$\J_K$ sont de type~BO'.
 Or, d'apr\`es la proposition~\ref{prop:compat}, on a~$ N_{K/L}(\q) = \bq\OL$ dans le cas~BO
 et~$ N_{K/L}(\q) = \bqb\OL$ dans le cas~BO'.
 On peut donc prendre~$\bq' = \bq$ dans le cas~BO et~$\bq' = \bqb$ dans le cas~BO'.
\end{proof}

\begin{lemme}\label{lem:recfin}
Soit~$\al$ un \'el\'ement non nul de~$K$ premier \`a~$\pL$~; 
alors on a~:
 $$
 \prod\limits_{\q \nmid \pL} \la \left(\sq\right)^{\mathrm{val}_{\q}(\al)} = N_{K/L}\left(\al\right) \Mod\  \pL.
 $$ 
\end{lemme}

\begin{proof}
On note~$r'$ et~$(r'_{\nu})_{\nu}$ les applications associ\'ees par la th\'eorie du corps de classes au caract\`ere~$\la$, comme les applications~$r$ et $(r_{\nu})_{\nu}$ sont associ\'ees au caract\`ere~$\la^2$ au d\'ebut de la partie~\ref{cdc}.

D'apr\`es la proposition~\ref{prop:ordinaire}, le caract\`ere~$\la$ est non ramifi\'e hors de~$\pL$ et pour tout id\'eal premier~$\p$ au-dessus de~$\pL$, on a~$\ap = 1$.
De plus, par le point~1 de la proposition~\ref{prop:ordinaire} le corps~$K$ contient un corps quadratique imaginaire.
Ainsi, toutes ses places archim\'ediennes sont complexes et pour toute telle place~$\nu$, l'application~$r'_{\nu}$ est triviale.
On obtient donc, comme dans la d\'emonstration de la proposition~\ref{prop:recla2}~:
$$
\begin{array}{r c l}
 1 & = &  r' \left(\left(\iota_{\nu}(\al)\right)_{\nu}\right) 
    =  \prod\limits_{\nu}  r'_{\nu}\left(\iota_{\nu}(\al)\right) 
   =  \prod\limits_{\nu | \infty} 1 \times \prod\limits_{\q \nmid \pL}  r'_{\q}\left(\iota_{\q}(\al)\right) \times \prod\limits_{\p | \pL} r'_{\p}\left(\iota_{\p}(\al)\right) \\
   & = & \prod\limits_{\q \nmid \pL} \la (\sq )^{\mathrm{val}_{\q}(\al)} \times \prod\limits_{\p | \pL}  \left( N_{K_{\p}/\Q_{\p}}\left(\iota_{\p}(\alpha)\right)^{ - \ap} \Mod  p \right) \\
   & = & \prod\limits_{\q \nmid \pL} \la (\sq )^{\mathrm{val}_{\q}(\al)} \times \left( \prod\limits_{\p | \pL}   N_{K_{\p}/\Q_{\p}}\left(\iota_{\p}(\alpha)\right) \Mod  p \right)^{ - 1} .\\
\end{array}
$$
Comme le nombre premier~$p$ est d\'ecompos\'e dans~$L$ (point 3 de la proposition~\ref{prop:ordinaire}), le compl\'et\'e~$L_{\pL}$ de~$L$ en~$\pL$ co\"incide avec~$\Qp$ et on a~:
$$
\prod\limits_{\p | \pL}   N_{K_{\p}/\Q_{\p}}\left(\iota_{\p}(\alpha)\right) \Mod p  = \prod\limits_{\p | \pL}   N_{K_{\p}/L_{\pL}}\left(\iota_{\p}(\alpha)\right)  \Mod p = N_{ K / L }(\alpha) \Mod \pL.
$$

\end{proof}

\begin{proposition}\label{prop:alq}
Soit~$\q$ un id\'eal maximal de~$K$ premier \`a~$\pL$.
Il existe un unique \'el\'ement~$\alq$ de~$\OL$ v\'erifiant~:
$$
 N_{K/L}(\q) =  \alq \OL \text{ et } \la(\sq) = \alq \Mod \pL.
$$
\end{proposition}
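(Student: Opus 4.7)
The plan is to prove existence by reducing the general case to that of ideals in~$\J_K$, which is already handled by Proposition~\ref{prop:ordinaire}(6), via the class representative supplied by Proposition~\ref{prop:genclid} and the reciprocity formula of Lemma~\ref{lem:recfin}. Uniqueness will follow from a short argument with the units of the imaginary quadratic field~$L$. No serious obstacle is expected: the key point is that the lower bound $p > C_K$ makes the chosen class representative in~$\J_K$ automatically coprime to~$\pL$, which is exactly what is needed in order to apply Lemma~\ref{lem:recfin}.

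For uniqueness, I would observe that if $\alq$ and $\alq'$ both satisfy the two conditions, then $u := \alq / \alq'$ is a unit of~$\OL$ and, since $\alq'$ generates the ideal $N_{K/L}(\q)$ (itself coprime to~$\pL$), the congruence $\alq \equiv \alq' \Mod \pL$ forces $u \equiv 1 \Mod \pL$. As~$L$ is imaginary quadratic, $\OL^{\times}$ consists of at most six roots of unity~$\zeta$; one checks directly that for any nontrivial such~$\zeta$, the integer $|N_{L/\Q}(\zeta-1)|$ lies in $\{1,2,3,4\}$, so the hypothesis $p \geq 5$ forbids $\zeta \equiv 1 \Mod \pL$ unless $\zeta = 1$. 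Hence $u = 1$.

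For existence, I would first apply Proposition~\ref{prop:genclid} to select an ideal $\qo \in \J_K$ in the same ideal class as~$\q$, and write $\q \qo^{-1} = (\al)$ with $\al \in K^{\times}$. By definition of~$\J_K$ and because $p > C_K$ is strictly larger than $2(\DK)^{Ah_K}$, the ideal~$\qo$ lies above a rational prime distinct from~$p$, so it is coprime to~$\pL$; as~$\q$ is coprime to~$\pL$ by hypothesis, so is~$\al$. I would then set
$$
\alq := \alpha_{\qo} \cdot N_{K/L}(\al),
$$
with $\alpha_{\qo} \in \OL$ obtained from Proposition~\ref{prop:ordinaire}(6) applied to~$\qo$. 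The ideal generated by~$\alq$ in~$\OL$ is
$$
\alpha_{\qo} \OL \cdot N_{K/L}(\al\,\OK) = N_{K/L}(\qo) \cdot N_{K/L}(\q \qo^{-1}) = N_{K/L}(\q),
$$
as desired. For the congruence modulo~$\pL$, Lemma~\ref{lem:recfin} applied to~$\al$ reduces to the contributions at~$\q$ and~$\qo$ (of respective valuations $+1$ and $-1$), giving
$$
\la(\sq) \cdot \la(\sqo)^{-1} \equiv N_{K/L}(\al) \Mod \pL.
$$
Combining with $\la(\sqo) \equiv \alpha_{\qo} \Mod \pL$ from Proposition~\ref{prop:ordinaire}(6) then yields $\la(\sq) \equiv \alpha_{\qo} N_{K/L}(\al) = \alq \Mod \pL$, as required.
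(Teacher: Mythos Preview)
Your proof is correct and follows essentially the same route as the paper's: choose a representative $\qo\in\J_K$ of the ideal class of~$\q$ via Proposition~\ref{prop:genclid}, set $\alq=\alpha_{\qo}\,N_{K/L}(\al)$ with $(\al)=\q\qo^{-1}$, verify the ideal equality directly, and obtain the congruence from Lemma~\ref{lem:recfin} combined with Proposition~\ref{prop:ordinaire}(6); uniqueness is deduced from the fact that a unit of~$\OL$ congruent to~$1$ modulo~$\pL$ must equal~$1$ when $p\geq 5$. Your write-up is in fact slightly more careful than the paper's in two places: you make explicit why $\qo$ (and hence~$\al$) is coprime to~$\pL$, and you spell out the bound on $|N_{L/\Q}(\zeta-1)|$ for the roots of unity~$\zeta$ of~$\OL$.
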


\begin{proof}
Soit~$\q$ un id\'eal maximal de~$K$ premier \`a~$\pL$.
D'apr\`es la proposition~\ref{prop:genclid}, il existe un id\'eal premier~$\q'$ dans~$\J_K$ et un \'el\'ement non nul~$\alpha$ de~$K$ qui v\'erifient~:~$\q = \q' \cdot \left( \alpha \OK \right)$.

Soit~$\beta'_{\q'}$ dans~$\OL$ qui v\'erifie le point~6 de la proposition~\ref{prop:ordinaire} pour l'id\'eal~$\q'$.
Alors l'id\'eal (entier)~$N_{K/L}(\q)$ de~$\OL$ est engendr\'e par~$\beta'_{\q'} \times N_{K/L}(\alpha)$.
De plus, par le lemme~\ref{lem:recfin} appliqu\'e \`a~$\alpha$, on a~:
$$
\la(\sq) \la(\sigma_{\q'})^{-1} = N_{K/L}(\alpha) \Mod \pL .
 $$
Comme on a, par choix de~$\beta'_{\q'}$,~$\la(\sigma_{\q'})  =  \beta'_{\q'} \Mod \pL$,
on obtient finalement~:
$$
\la(\sq)  =  \la(\sigma_{\q'}) \left( N_{K/L}(\alpha) \Mod \pL\right) = \beta'_{\q'} \times N_{K/L}(\alpha) \Mod \pL .
$$
Ainsi, on peut prendre pour~$\alpha_{\q}$ l'\'el\'ement~$\beta'_{\q'} \times N_{K/L}(\alpha)$ de~$\OL$. 

On suppose que deux \'el\'ements~$\alq$ et~$\alq'$ de~$\OL$ satisfont la conclusion de la proposition.
 Alors~$\alq$ et~$\alq'$  engendrent le m\^eme id\'eal de~$\OL$, donc sont associ\'es par une unit\'e~$u_\q$ de~$\OL$.
 Comme~$\alq$ et~$\alq'$  ont m\^eme image (non nulle) modulo~$\pL$, l'unit\'e~$u_\q$ se r\'eduit sur~$1$ modulo~$\pL$.
 Pour~$p$ sup\'erieur ou \'egal \`a~$5$, cela implique que~$u_\q$ est \'egal \`a~$1$, donc que~$\alq$ et~$\alq'$ sont \'egaux.
\end{proof}

\begin{lemme}\label{lem:norm}
Soit~$\al$ un \'el\'ement non nul de~$K$ premier \`a~$\pL$~;  
alors on a dans~$L$~:
 $$
 N_{K/L}\left(\al\right) = \prod\limits_{\q \nmid \pL}\alq^{\mathrm{val}_{\q}(\al)}.
 $$ 
\end{lemme}

\begin{proof}
Par la proposition~\ref{prop:alq}, les éléments~$\prod_{\q \nmid \pL} \alq^{\mathrm{val}_{\q}(\al)}$ et~$N_{K/L}\left(\al\right)$ engendrent le même idéal fractionnaires de~$L$.
Ils diffèrent donc d'une unit\'e~$u_\al$ de~$L$.
Par le lemme~\ref{lem:recfin} et la proposition~\ref{prop:alq}, on a~: 
$$
 \prod\limits_{\q \nmid \pL} \alq^{\mathrm{val}_{\q}(\al)} \Mod\  \pL \equiv N_{K/L}\left(\al\right) \Mod\  \pL.
 $$
 Ainsi, l'unit\'e~$u_\al$ se r\'eduit sur~$1$ modulo~$\pL$.
 Pour~$p$ sup\'erieur ou \'egal \`a~$5$, on obtient donc que~$u_\al$ est \'egal \`a~$1$.
\end{proof}

Dans le but d'\'etendre la relation du lemme~\ref{lem:norm} \`a tous les \'el\'ements non nuls de~$K$,
 on associe \`a tout id\'eal premier~$\p$ de~$K$ au-dessus de~$\pL$ un \'el\'ement~$\alp$ de~$L^{\times}$ de la mani\`ere suivante.
 
\begin{notation}\label{not:alp}
Soit~$\p$ un id\'eal premier de~$K$ au-dessus de~$\pL$. On fixe un \'el\'ement~$x_\p$ de~$K^{\times}$ v\'erifiant~:
\begin{enumerate}
\item[(i)] $\val_{\p}(x_\p) = 1$ ;
\item[(ii)] pour tout idéal $\p'$ de $K$ au-dessus de $\pL$ différent de $\p$, $\val_{\p'}(x_\p) = 0$.
\end{enumerate}
On d\'efinit alors~:
$$
\alp = N_{K/L}(x_\p) \prod\limits_{\q \nmid \pL} \alq^{- \val_{\q}(x_\p)}.
$$
\end{notation}

\begin{lemme}\label{lem:normpartout}
Soit~$\al$ un \'el\'ement non nul de~$K$. 
Alors on a dans~$L$~:
 $$
 N_{K/L}\left(\al\right) = \prod\limits_{\nu \nmid \infty}\al_\nu^{\mathrm{val}_{\nu}(\al)}.
 $$ 
\end{lemme}

\begin{proof}
On considère~:
$
\al' = \al \prod\limits_{\p | \pL} x_\p^{- \val_\p(\al)}
$.
Par choix des~$(x_\p)_{\p | \pL} $,~$\al'$ est premier à~$\pL$~; on a donc par le lemme \ref{lem:norm}~:
$ 
N_{K/L}\left(\al' \right) = \prod\limits_{\q \nmid \pL}\alq^{\mathrm{val}_{\q}(\al')}
$.
On en d\'eduit~:
$$
\begin{array}{r c l }
N_{K/L}\left(\al \right) & = & N_{K/L}\left(\al' \right) \prod\limits_{\p | \pL}  N_{K/L}\left( x_\p \right)^{\val_\p(\al)} \\
&  =  & \prod\limits_{\q \nmid \pL}\alq^{\mathrm{val}_{\q}(\al')} \prod\limits_{\p | \pL}  \left( \alp \prod\limits_{\q \nmid \pL} \alq^{\val_\q(x_\p)} \right)^{\val_\p(\al)} \\
  & = &  \prod\limits_{\p | \pL}  \alp^{\val_\p(\al)}   \prod\limits_{\q \nmid \pL} \alq^{\val_{\q}(\al') + \sum\limits_{\p | \pL}\val_\q(x_\p)\val_\p(\al)}. \\
\end{array}
$$
Or, pour un idéal premier $\q$ de $K$ premier à $\pL$, on a :
$$
\val_{\q}(\al') = \val_{\q}(\al) - \sum\limits_{\p | \pL}\val_\q(x_\p)\val_\p(\al).
$$
On en déduit~:
$
N_{K/L}\left(\al \right) = \prod\limits_{\nu \nmid \infty}\alq^{\mathrm{val}_{\nu}(\al)}
$.
\end{proof}

\subsection{D\'emonstration du th\'eor\`eme~II}\label{ssec:thm2}

On rappelle que l'existence d'une courbe semi-stable sur~$K$ et ayant des multplications complexes sur~$K$ est caract\'eris\'ee par l'existence de certains caract\`eres des id\`eles de~$K$, selon la proposition suivante
(voir la partie~\ref{cdc} pour les notations).
\begin{proposition}\label{prop:critCM}
Les assertions suivantes sont équivalentes.
\begin{enumerate}
\item Il existe une courbe elliptique définie sur~$K$, ayant des multiplications complexes sur~$K$ et partout bonne réduction sur~$K$.
\item Il existe un corps quadratique imaginaire~$F$ contenu dans~$K$ et un caractère continu~$\varepsilon$ de~$\Ax_K$ dans~$F^{\times}$ vérifiant~:
	\begin{enumerate}
	\item[(i)] $\varepsilon$ est trivial sur le sous-groupe~$ \prod\limits_{\nu | \infty} K^{\times}_{\nu}   \prod\limits_{\nu \nmid \infty} U_{K_{\nu}} $ de~$\Ax_K$~; 
	\item[(ii)]  $\varepsilon$ co\"incide avec la norme~$N_{K/F}$ sur les idèles principales.
	\end{enumerate}
\end{enumerate} 
\end{proposition}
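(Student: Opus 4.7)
Le plan est d'invoquer la correspondance classique, due \`a Shimura et Taniyama, entre les courbes elliptiques sur un corps de nombres~$K$ \`a multiplications complexes d\'efinies sur~$K$ et les Gr\"ossencharaktere \`a valeurs dans un sous-corps quadratique imaginaire de~$K$ (voir par exemple Shimura, \emph{Introduction to the arithmetic theory of automorphic functions}, ou Serre, \emph{Abelian $\ell$-adic representations and elliptic curves}). Les deux implications en d\'ecoulent sym\'etriquement.

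Pour l'implication (1) $\Rightarrow$ (2), je partirais d'une courbe~$E'$ v\'erifiant l'assertion~1. L'anneau~$\Endo_K(E')\otimes\Q$ est un corps quadratique imaginaire~$F$~; comme les multiplications complexes sont d\'efinies sur~$K$, le corps~$F$ s'injecte dans~$K$. J'associerais \`a~$E'$ son Gr\"ossencharakter~$\psi_{E'}$ \`a valeurs dans~$F^{\times}$ construit par Serre et Tate. Le th\'eor\`eme principal de la multiplication complexe fournit directement la propri\'et\'e~(ii). La trivialit\'e de~$\psi_{E'}$ sur les unit\'es locales aux places finies d\'ecoulerait du crit\`ere d'Ogg--N\'eron--Shafarevich appliqu\'e \`a chaque composante~$\ell$-adique et de l'hypoth\`ese de partout bonne r\'eduction. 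Pour les places archim\'ediennes, je noterais que~$K$, \'etant galoisien sur~$\Q$ et contenant le corps quadratique imaginaire~$F$, est totalement imaginaire~; les compl\'et\'es~$K_{\nu}^{\times}$ sont alors isomorphes \`a~$\C^{\times}$ et donc connexes, ce qui, joint au caract\`ere discret du but~$F^{\times}$, force la trivialit\'e de~$\psi_{E'}$ sur ces composantes.

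Pour l'implication r\'eciproque (2) $\Rightarrow$ (1), je proc\'ederais par la construction inverse de Shimura--Taniyama~: un caract\`ere~$\varepsilon$ satisfaisant (i) et (ii), c'est-\`a-dire non ramifi\'e partout et de type infini donn\'e par~$N_{K/F}$, correspond au Gr\"ossencharakter d'une courbe elliptique~$E'$ d\'efinie sur~$K$ et \`a multiplications complexes par un ordre de~$F$. La non-ramification aux places finies se traduirait, par la r\'eciproque du crit\`ere d'Ogg--N\'eron--Shafarevich appliqu\'ee aux repr\'esentations~$\ell$-adiques attach\'ees \`a~$E'$, en la partout bonne r\'eduction de~$E'$ sur~$K$.

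La difficult\'e principale n'est pas dans la structure d'ensemble de l'argument, mais dans l'invocation pr\'ecise de la construction inverse~: il faut s'assurer que~$\varepsilon$ donne effectivement naissance \`a une courbe elliptique (et non \`a une vari\'et\'e ab\'elienne de dimension sup\'erieure), ce qui est garanti ici par le fait que le corps de multiplications complexes, \`a savoir~$F$, est quadratique imaginaire et contenu dans~$K$, de sorte que le corps r\'eflexe co\"incide avec~$F$ lui-m\^eme.
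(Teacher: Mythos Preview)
Your sketch is correct and follows essentially the same route as the paper: both invoke the classical dictionary between CM elliptic curves over~$K$ with CM defined over~$K$ and algebraic Hecke characters with values in the CM field, as developed by Serre (the paper cites \S4.5, th\'eor\`eme~5 et remarque~2, and \S3.1 of~\cite{[Se]}, while you point to Shimura--Taniyama and Serre's book on abelian $\ell$-adic representations, which covers the same material). Your added details---the connectedness argument at archimedean places and the appeal to N\'eron--Ogg--Shafarevich for the translation between condition~(i) and everywhere good reduction---are exactly what underlies the references the paper gives; the paper simply singles out that the good-reduction part comes ``plus sp\'ecialement de la condition~(i)''.
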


\begin{proof}
On renvoie \`a~\cite{[Se]},~\S4.5 (th\'eor\`eme~5 et remarque 2) et~\S3.1.
On remarquera que la semi-stabilit\'e sur~$K$ d'une courbe elliptique \`a multiplications complexes associ\'ee au caract\`ere~$\varepsilon$ provient plus sp\'ecialement de la condition (i).
\end{proof}

\begin{theoreme}[Th\'eor\`eme~II]
On suppose que le corps~$K$ est galoisien sur~$\Q$ et qu'il existe une courbe elliptique~$E$ semi-stable sur~$K$ et un nombre premier~$p$ strictement supérieur à~$C_K$ tels que la représentation~$\phiEp$ est réductible.

Alors il existe une courbe elliptique~$E'$ définie sur~$K$, ayant des multiplications complexes sur~$K$ et partout bonne réduction sur~$K$,
 telle que que la semi-simplifi\'ee de la repr\'esentation~$\phiEp$ est isomorphe \`a la repr\'esentation~$\varphi_{E',p}$.
\end{theoreme}

\begin{proof}
On est dans les conditions d'application de la partie~\ref{ssec:sysco}.
Avec les notations de cette partie, on consid\`ere le corps quadratique imaginaire~$L$ (contenu dans~$K$) donn\'e par la proposition~\ref{prop:ordinaire}
et la famille~$(\alpha_\nu )_{\nu \nmid \infty}$ d'\'el\'ements de~$L^{\times}$ donn\'ee par la proposition~\ref{prop:alq} et la notation~\ref{not:alp}.

On définit un caractère continu~$\varepsilon$ de~$\Ax_K$ dans~$L^{\times}$ par~:
\begin{enumerate}
\item[(i)] $\varepsilon$ est trivial sur le sous-groupe~$ \prod\limits_{\nu | \infty} K^{\times}_{\nu}   \prod\limits_{\nu \nmid \infty} U_{K_{\nu}} $ ;
\item[(ii)] pour toute place finie $\nu$ de $K$, $\varepsilon$ envoie toute uniformisante de $K_\nu$ sur $\alpha_{\nu}$.
\end{enumerate}
D'apr\`es le lemme~\ref{lem:normpartout},~$\varepsilon$ co\"incide donc avec la norme~$N_{K/L}$ sur les id\`eles diagonales.

Par la proposition~\ref{prop:critCM} et la partie~4.5 (remarque~2) de~\cite{[Se]}, le caract\`ere~$\varepsilon$ est associ\'e \`a une courbe elliptique~$E'$ d\'efinie sur~$K$,
ayant des multiplications complexes par un ordre de~$L$ et partout bonne r\'eduction sur~$K$.
La repr\'esentation~$\varphi_{E',p}$ de~$G_K$ associ\'ee aux points de~$p$-torsion de~$E'$ est ab\'elienne et \`a valeurs dans~$\left( \OL / p\OL \right)^{\times}$.
Comme~$p$ est d\'ecompos\'e dans~$L$ (proposition~\ref{prop:ordinaire}),~$\varphi_{E',p}$ est la somme de deux caract\`eres de~$G_K$, chacun \`a valeurs dans~$\Fpx$, et v\'erifie, pour tout id\'eal premier~$\q$ de~$K$ premier \`a~$p$~:
$$
\varphi_{E',p}(\sq) = (\alq \Mod \pL , \alq \Mod \overline{\pL}).
$$

La repr\'esentation~$\phiEp$ est r\'eductible~; sa semi-simplifi\'ee est donc la somme des deux caract\`eres~$\la$ et~$\kip \la^{-1}$, chacun \`a valeurs dans~$\Fpx$.
D'apr\`es la proposition~\ref{prop:alq}, on a pour tout id\'eal premier~$\q$ de~$K$ premier \`a~$p$~:
$$
\begin{array}{r c l}
\phiEp(\sq) & = & (\alq \Mod \pL , (N\q) \alq^{-1} \Mod \pL) \\
& = & (\alq \Mod \pL , \overline{\alq} \Mod \pL)= (\alq \Mod \pL , \alq \Mod \overline{\pL}).
\end{array}
$$ 
Ainsi, la semi-simplifi\'ee de~$\phiEp$ et la repr\'esentation~$\varphi_{E',p}$ sont isomorphes.
\end{proof} 

Le th\'eor\`eme~I de l'introduction est une cons\'equence directe du th\'eor\`eme~II.

\subsection{Exemples de corps v\'erifiant le th\'eor\`eme~I}\label{ssec:HypH}

Dans cette partie on donne quelques exemples de corps v\'erifiant le th\'eor\`eme~I.
\begin{TheoIII}
On suppose que le corps~$K$ est galoisien sur~$\Q$ et vérifie l'une des propri\'et\'es suivantes.
\begin{enumerate}
\item Le corps~$K$ ne contient le corps de classes de Hilbert d'aucun corps quadratique imaginaire. Cette situation se produit notamment dans les cas suivants~:
	\begin{enumerate}
	\item[(1a)] $K$ est de degr\'e impair sur~$\Q$~;
	\item[(1b)] $K$ est totalement r\'eel.
	\end{enumerate}
\item Pour tout corps quadratique imaginaire~$L$ dont le corps de classes de Hilbert est inclus dans~$K$, il existe une unit\'e de~$K$ (pouvant d\'ependre de~$L$) dont la norme dans l'extension~$K / L$ est diff\'erente de~$1$.  Cette situation se produit notamment dans les cas suivants~:
	\begin{enumerate}
	\item[(2a)] le degr\'e de~$K$ sur~$\Q$ est le double d'un nombre impair~;
	\item[(2b)] $K$ est égal \`a~$FM$, avec~$F$ un corps quadratique imaginaire et~$M$ un corps totalement r\'eel, galoisien, poss\'edant une unit\'e de norme sur~$\Q$ \'egale \`a~$-1$~;
	\item[(2c)] $K$ v\'erifie la condition~$(C)$ de~\cite{[Krau]}.
	\end{enumerate}
\end{enumerate}
Alors il n'existe pas de courbe elliptique définie sur~$K$, ayant des multiplications complexes sur~$K$ et partout bonne réduction sur~$K$.
\end{TheoIII}

\begin{proof}
On montre d'abord que les propri\'et\'es~1 et~2 du th\'eor\`eme impliquent sa conclusion.

On suppose pour cela qu'il existe une courbe elliptique~$E$ définie sur~$K$, ayant des multiplications complexes sur~$K$ et semi-stable sur~$K$, c'est-\`a-dire ayant partout bonne réduction sur~$K$.
Soit~$R$ l'anneau des endomorphismes de~$E$ sur~$K$.
Alors~$R$ est un ordre dans un corps quadratique imaginaire~$L$, qui est donc contenu dans~$K$ (voir \cite{[Se]} \S4.5).
De plus l'invariant~$j$ de~$E$ (qui est contenu dans~$K$) engendre sur~$L$ un corps qui contient le corps de classes de Hilbert de~$L$ (voir~\cite{[SeCM]}).

Ainsi,~$K$ contient~$L$ et son corps de classes de Hilbert.
Ceci montre que, si~$K$ v\'erifie la propri\'et\'e~1, alors une telle courbe elliptique n'existe pas.

De plus, il est associ\'e \`a la courbe~$E$ un caract\`ere~$\varepsilon$ de~$\Ax_K$ dans~$L^{\times}$ comme dans la proposition~\ref{prop:critCM}.
Les propri\'et\'es du caract\`ere~$\varepsilon$ donnent alors que la norme de toute unit\'e de~$K$ dans l'extension~$K/L$ est \'egale \`a~$1$.
Ceci montre donc que, si~$K$ v\'erifie la  propri\'et\'e~2, alors un tel caract\`ere~$\varepsilon$ n'existe pas.

Si le corps~$K$ v\'erifie~(1a) ou~(1b), alors il v\'erifie la propri\'et\'e~1 du th\'eor\`eme.

Si le corps~$K$ v\'erifie~(2a), alors~$-1$ v\'erifie la propri\'et\'e~2 pour tout corps quadratique imaginaire~$L$ contenu dans~$K$.
On suppose que le corps~$K$ v\'erifie~(2b). Soit~$u$ une unit\'e de~$M$ de norme (sur~$\Q$) \'egale \`a~$-1$.
Alors pour tout corps quadratique imaginaire~$L$ inclus dans~$K$, les corps~$L$ et~$M$ sont lin\'eairement disjoints et on a~:~$N_{K/L}(u) = N_{M / \Q}(u) = -1$.
Ainsi,~$K$ v\'erifie la propri\'et\'e~$2$.

On d\'emontre enfin que si~$K$ v\'erifie la condition~$(C)$ de~\cite{[Krau]}, il v\'erifie la propri\'et\'e~$2$ du th\'eor\`eme.
On utilise ici les notations de la partie~1.1.2 de~\cite{[Krau]}.
Soit~$n$ \'egal \`a~$\frac{d_K}{2}$.
La condition~$(C)$ donne une unit\'e~$u$ de~$K$ telle que~$1$ n'est pas racine du polyn\^ome~$H^{(u)}_n(X)$.
Or, les racines de~$H^{(u)}_n(X)$ sont les produits de~$n$ racines (pas forc\'ement distinctes) du polyn\^ome minimal de~$u$ sur~$\Q$.
En particulier,  pour tout corps quadratique imaginaire~$L$ inclus dans~$K$, la norme~$N_{K/L}(u)$ de~$u$ dans l'extension~$K/L$ est une racine de~$H^{(u)}_n(X)$.
Cette norme est donc diff\'erente de~$1$ et l'unit\'e~$u$ satisfait  la propri\'et\'e~$2$ pour tous les corps~$L$ considérés.
\end{proof}

\begin{remarques}\ 

\begin{enumerate}
\item D'apr\`es le th\'eor\`eme~1 de~\cite{[Krau]}, tous les corps de nombres satisfaisant la condition~$(C)$ de~\cite{[Krau]} v\'erifient l'assertion~2 du th\'eor\`eme~I de l'introduction. Ils v\'erifient donc aussi l'assertion n\'ecessaire~1. Le th\'eor\`eme~III donne une d\'emonstration directe de ce fait.
\item Parmi les corps satisfaisant la condition~$(C)$ de~\cite{[Krau]} et qui ne figurent pas explicitement dans les autres cas du th\'eor\`eme~III, on notera les corps de groupe de Galois isomorphe \`a un groupe sym\'etrique ou \`a un groupe altern\'e et les corps cyclotomiques engendr\'es par des racines de l'unit\'e d'ordre premier (voir le th\'eor\`eme~2 de~\cite{[Krau]}).
\end{enumerate}

\end{remarques}

\nocite{}
\shorthandoff{:}
\bibliographystyle{alpha}
\bibliography{Sest.bib}

\end{document}